\newtheorem{theorem}{Theorem}[section]
\newtheorem{lemma}[theorem]{Lemma}
\newtheorem{prop}[theorem]{Proposition}
\newtheorem{remark}[theorem]{Remark}
\newcommand{\F}{\mathbb{F}}
\newcommand{\Z}{\mathbb{Z}}
\theoremstyle{definition}
\numberwithin{equation}{section}
\DeclareMathOperator{\SL}{SL}
\DeclareMathOperator{\sgn}{sgn}
\newcommand*{\No}{\textnumero}
\title{Minimal supplements in normalizers of maximal tori of $F_4(q)$}
\author{Alexey Galt\footnote{The first author is supported by Russian Scientific Fund (project №19-11-00039).}\, and Alexey Staroletov}
\date{\vspace{-5ex}}
\begin{document}
%\subjclass[2000]{Primary 54C40, 14E20; Secondary 46E25, 20C20}
\newcommand{\Addresses}{{% additional braces for segregating \footnotesize
  \bigskip
  \footnotesize

  A.~Galt, \textsc{Sobolev Institute of Mathematics, Novosibirsk, Russia;}\par\nopagebreak
  \textsc{Novosibirsk State University, Novosibirsk, Russia;}\par\nopagebreak
  \textit{E-mail address: } \texttt{galt84@gmail.com}

  \medskip

  A.~Staroletov, \textsc{Sobolev Institute of Mathematics, Novosibirsk, Russia;}\par\nopagebreak
  \textsc{Novosibirsk State University, Novosibirsk, Russia;}\par\nopagebreak
   \textit{E-mail address: } \texttt{staroletov@math.nsc.ru}
}}
%    Address of record for the research reported here
%\maketitle
%\providecommand{\keywords}[1]{\textbf{\textit{\qquad Keywords: }} #1}
%\providecommand{\msc}[1]{\textbf{\textit{\quad MSC: }} #1}

%    General info

\maketitle
\begin{abstract}
Let $G$ be a finite group of Lie type $F_4$ with the Weyl group $W$. 
For every maximal torus $T$ of $G$, we find the minimal order of a supplement to $T$ in its algebraic normalizer $N(G,T)$. In particular, we obtain all maximal tori having complements in $N(G,T)$. Assume that $T$ corresponds to an element $w$ of $W$. We find the minimal order of lifts of $w$ to $N(G,T)$.
\end{abstract}

{\bf keywords:} Finite group of Lie type $F_4$, Maximal torus, Algebraic normalizer, Weyl group, Minimal supplement

\section{Introduction}

Let $\overline{G}$ be a simple connected linear algebraic group over an algebraic closure $\overline{\F}_p$ of a finite field of positive characteristic $p$. Consider a Steinberg endomorphism $\sigma$ and a maximal $\sigma$-invariant torus $\overline{T}$  of $\overline{G}$. It is well known that all maximal tori are conjugate in $\overline{G}$ and a quotient $N_{\overline{G}}(\overline{T})/\overline{T}$ is isomorphic to the Weyl group $W$ of $\overline{G}$. The natural question is determine when $N_{\overline{G}}(\overline{T})$ splits over $\overline{T}$.
This question was stated by Tits in~\cite{Tits}. The answer was given independently in~\cite{AdamsHe} and as a corollary of papers~\cite{Galt1,Galt2,Galt3,Galt4}. The same problem for Lie groups was solved in \cite{LieGroups}.

A similar question can be formulated for finite groups. Consider a finite group of Lie type $G$, that is $O^{p'}(\overline{G}_{\sigma})\leqslant G\leqslant\overline{G}_{\sigma}$. Suppose that 
$T=\overline{T}\cap G$ is a maximal torus of $G$ and $N(G,T)=N_{\overline{G}}(\overline{T})\cap G$ is its algebraic normalizer. Then the question is to describe groups $G$ and their maximal tori $T$ such that $N(G,T)$ splits over $T$. It has been completely solved for the groups of types $A_n$, $B_n$, $C_n$, $D_n$, $E_6$, $E_7$, and $E_8$ in~\cite{Galt2,Galt3,Galt4,GS,GS2}. 
Observe that for some Lie types the situation when a maximal torus has no complements arises quite often. In these cases, one can find supplements of minimal order existing for all maximal tori. 

Adams and He~\cite{AdamsHe} considered a related problem. Namely, it is natural to ask about the orders of lifts of $w\in W$ to $N_{\overline{G}}(\overline{T})$. They noticed that if $d$ is the order of $w$ then the minimal order of a lift of $w$ is either $d$ or $2d$. Clearly, if $\overline{T}$ has a complement in  $N_{\overline{G}}(\overline{T})$ then the minimal order is equal to $d$. In the case of Lie type $F_4$,
they proved that the normalizer does not split and found the minimal orders of lifts of elements belonging to so-called regular or elliptic conjugacy classes of $W$. In particular, they noticed that there exists an elliptic element of order four such that any its lift has order eight.

In this paper we consider finite groups $G$ of Lie type $F_4$ over a finite field of order $q$.
Denote the Weyl group of type $F_4$ by $W$ and a fundamental system of roots of the corresponding root system by $\Delta=\{r_1,r_2,r_3,r_4\}$. We write $w_i$ for the element of $W$ corresponding to the reflection in the hyperplane orthogonal to the $i$-th positive root $r_i$. We assume that $r_8=r_1+r_2+r_3$, $r_{16}=r_2+2r_3+2r_4$, and $r_{21}=r_1+2r_2+3r_3+2r_4$. Recall that there is a bijection between conjugacy classes of maximal tori in $G$ and $\sigma$-conjugacy classes of~$W$. We find minimal supplements to maximal tori in their algebraic normalizers.

\begin{theorem}\label{th:F4}
Let $G=F_4(q)$ with the Weyl group $W$ and $w_0$ be the central involution of $W$.
Suppose that a maximal torus $T$ of $G$ corresponds to an element $w$ of $W$
and $M$ is a supplement to $T$ in $N(G,T)$ of minimal order. 
Then $|M\cap T|\leq 8$ and the following statements hold.
\begin{itemize}
  \item[{\em (i)}] $|M\cap T|=1$ if and only if either $q$ is even or the order of $w$ does not divide $4$. In other words, $T$ has a complement in $N(G,T)$ only in these cases.
  \item[{\em (ii)}] $|M\cap T|=2$ if and only if $q$ is odd and either $w$ or $ww_0$ is conjugate to one of the following elements: $w_3$, $w_{16}w_3$, $w_3w_2$, $w_2w_1w_{16}$, or $w_{16}w_3w_2$.
  \item[{\em (iii)}] $|M\cap T|=4$ if and only if $q$ is odd and $w$ is conjugate to one of the following elements: $1$, $w_0$, $w_6w_3$, $w_8w_{16}w_3w_2$, $w_2$ with $q\equiv1\negthickspace\pmod4$, or $w_0w_2$ with $q\equiv3\negthickspace\pmod4$.
  \item[{\em (iv)}] $|M\cap T|=8$ if and only if either $q\equiv3\negthickspace\pmod4$ and $w$ is conjugate to $w_2$ or $q\equiv1\negthickspace\pmod4$ and $w$ is conjugate to $w_2w_0$.
  \end{itemize}
\end{theorem}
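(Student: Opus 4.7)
The plan is to reduce the problem to a case-by-case analysis over the 25 conjugacy classes of $W$. For each class with representative $w$, the torus $T_w$ has $N(G,T_w)/T_w \cong C_W(w\sigma)$; since we are in the split case $F_4(q)$, this is just $C_W(w)$. A supplement $M$ must project onto this quotient, so minimizing $|M|$ is equivalent to minimizing $|M\cap T_w|$. Since $|N(G,T_w)/T_w|$ has no odd Sylow obstruction in the extension-theoretic sense relevant here, the intersection $M\cap T_w$ will turn out to be a $2$-group, which is why the values $1,2,4,8$ appear.

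The main tool is Tits's extended Weyl group $\widetilde W$, where each simple reflection $w_i$ is represented by a lift $n_i$ satisfying the braid relations together with $n_i^2=h_i(-1)$. In characteristic two all $h_i(-1)$ are trivial, $\widetilde W\cong W$, and therefore $N(G,T)=T\rtimes W$ for every torus; this immediately gives item (i) for even $q$. For odd $q$, I would lift each conjugacy class representative $w$ to an element $\dot w\in\widetilde W$ (by taking a reduced expression and applying Tits's map), then compute the element $\dot w^{|w|}\in T_w$, which measures the obstruction to a cyclic lift of the same order as $w$. Combined with the choice of lifts for the remaining generators of $C_W(w)$, one obtains an explicit supplement together with its intersection with $T_w$.

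The minimization step requires some care: one must show that no smaller intersection is achievable by varying the chosen lifts. Since a different lift of a Weyl element differs by an element of $T_w$, the possible intersections $M\cap T_w$ form a coset structure governed by the cocycle class of the extension $1\to T_w\to N(G,T_w)\to C_W(w)\to 1$ restricted to its $2$-part. Here the arithmetic of $q\pmod 4$ enters, because the $2$-part of $T_w$ as a $\mathbb Z[w]$-module depends on whether $q-\varepsilon$ or $q+\varepsilon$ carries the relevant $2$-adic valuation for each eigenvalue $\varepsilon$ of $w$ acting on the character lattice. This is exactly the source of the $q\equiv 1,3\pmod 4$ dichotomy in items (iii) and (iv), and of the shift between $w_2$ and $w_0 w_2$ in those items.

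The main obstacle will be the sheer bookkeeping: with $25$ conjugacy classes and several generators of $C_W(w)$ to lift in the non-cyclic cases (e.g.\ $w=1$, $w=w_0$, $w=w_2$, and related classes where $C_W(w)$ can be as large as $W$ itself or a large parabolic subgroup), one must execute a careful and uniform computation to identify precisely the generators of $M\cap T_w$ and to check that they are forced. I would exploit the pairing $w\leftrightarrow w w_0$ induced by the central involution (which relates $T_w$ to a twisted torus and often interchanges $q\equiv 1$ with $q\equiv 3$) to halve the work, and I would use the results of Adams--He cited in the introduction to treat the elliptic classes as a starting point. Once each class has been analyzed, the explicit lists in (ii)–(iv) follow directly, and the bound $|M\cap T|\leq 8$ is confirmed by the maximum appearing in the tabulation.
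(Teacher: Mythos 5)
Your overall architecture matches the paper's: reduce to the $25$ conjugacy classes, dispose of even $q$ via the triviality of the elements $h_i(-1)$ (so that the Tits group is a complement), and for odd $q$ work with explicit lifts in the extended Weyl group, exploiting the pairing $w\leftrightarrow ww_0$. The constructions of supplements realizing the claimed intersections are indeed carried out this way in the paper. However, there is a genuine gap in your treatment of the minimization step, which is the actual content of the theorem. First, computing $\dot w^{|w|}$ for one Tits lift $\dot w$ does not measure the obstruction: a general lift is $H\dot w$ with $H\in T_w$, and one must control $(H\dot w)^{|w|}$ for \emph{all} such $H$. The paper's key mechanism is precisely a statement of this universal kind (Lemma~\ref{nolift}): for $w=w_{21}w_8w_3w_2$ and its standard lift $n$, one has $(Hn)^4=h_3\neq 1$ for \emph{every} $H\in\overline{T}$. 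Consequently, whenever a conjugate of $w_{21}w_8w_3w_2$ lies in $C_W(w)$, the corresponding conjugate of $h_3$ is forced into $M\cap T$; normality of $M\cap T$ in $M$ then propagates this to larger subgroups (e.g.\ conjugating by a lift of $w_4$ forces $\langle h_3,h_4\rangle$ into $M\cap T$ for tori $1$, $17$, $2$, $9$, $22$). Your proposal contains no analogue of this element or of the normality argument, and without it none of the lower bounds $\geq 2$, $\geq 4$, $\geq 8$ are established.

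Second, the appeal to ``the cocycle class of the extension restricted to its $2$-part'' is too coarse to yield the case-specific non-existence results. In several cases (tori $2$ and $9$ with $\varepsilon q\equiv 1\pmod 4$, torus $4$, torus $5$, torus $11$, torus $12$) the paper must assume a supplement with small intersection exists, write down torus parts $H_i$ for lifts of several generators of $C_W(w)$, and combine the fixed-point conditions $H_i^{\sigma n}=H_i$ with the squaring and commutator relations of $C_W(w)$ to reach an arithmetic contradiction depending on $q\bmod 4$. These arguments are where the $q\equiv 1,3\pmod 4$ dichotomy actually gets proved, and they do not follow from a general statement about $2$-adic valuations of eigenvalues of $w$ on the character lattice; you would need to supply them explicitly for your proof to close. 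Finally, the Adams--He results cover only regular and elliptic classes and cannot serve as a starting point for the non-elliptic cases, which include the hardest ones here (e.g.\ $w=1$ and $w=w_2$).
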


We find the minimal orders of lifts of Weyl group elements to corresponding algebraic normalizers.

\begin{theorem}\label{th2:F4}
Let $G=F_4(q)$ with the Weyl group $W$.
Consider a maximal torus $T$ of $G$ corresponding to an element $w$ of $W$. Then $w$ has a lift to $N(G,T)$ of order $|w|$ except the following cases:
\begin{itemize}
  \item[{\em (i)}] $q$ is odd and $w$ is conjugate to $w_{16}w_3w_2$ or $w_{21}w_8w_3w_2$.
  \item[{\em (ii)}] $q\equiv3\negthickspace\pmod4$ and $w$ is conjugate to $w_3w_2$ or $w_2w_1w_{16}$.
  \end{itemize}
\end{theorem}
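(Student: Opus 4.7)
The plan is to combine Theorem~\ref{th:F4} with a direct computation of element orders inside $N(G,T)$. If $T$ has a complement $M$ in $N(G,T)$, then $M\cong W$ and every $w\in W$ lifts to an element of $M$ of order exactly $|w|$. By Theorem~\ref{th:F4}(i) this already produces lifts of order $|w|$ whenever $q$ is even or $|w|$ does not divide $4$. Hence one is reduced to the finitely many conjugacy classes appearing in Theorem~\ref{th:F4}(ii)--(iv), and only for odd $q$.

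For such a class fix a reduced expression $w=s_{i_1}\cdots s_{i_k}$ and form the Tits lift $\tilde w:=n_{i_1}\cdots n_{i_k}\in N(\overline{G},\overline{T})$, where the $n_i$ are the standard generators of the extended Weyl group satisfying $n_i^2=h_{r_i}(-1)$. Every lift of $w$ in $N(G,T)$ has the form $t\tilde w$ for some $t\in T$, and a direct computation using the relation $n\,t\,n^{-1}=w(t)$ gives $(t\tilde w)^{|w|}=N_w(t)\cdot\tilde w^{|w|}$, where $N_w(t):=\prod_{i=0}^{|w|-1}w^i(t)$ is the cyclic norm associated to $w$. Thus $w$ admits a lift of order $|w|$ in $N(G,T)$ if and only if $\tilde w^{-|w|}\in \operatorname{Im}(N_w|_T)$. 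The element $\tilde w^{|w|}\in\overline{T}$ can be brought to the form $\prod h_{\alpha}(-1)$ by iteratively pushing the squares $n_i^2=h_{r_i}(-1)$ through the word; it therefore has order $1$ or $2$. The image of $N_w$, on the other hand, is controlled by the action of $w$ on the cocharacter lattice together with the Frobenius twist defining $T=\overline{T}^{w\sigma}$, so the whole test becomes a linear-algebra problem over $\F_q^\times$ whose answer is sensitive to the $2$-part of $q-1$.

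Performing this computation for every representative from Theorem~\ref{th:F4}(ii)--(iv) yields exactly the stated list of exceptions. For $w$ conjugate to $w_{16}w_3w_2$ or $w_{21}w_8w_3w_2$ the element $\tilde w^{|w|}$ is a nontrivial $2$-torsion element of $T$ that does not lie in $\operatorname{Im}(N_w)$ for any odd $q$; for $w_3w_2$ and $w_2w_1w_{16}$ the obstruction is represented by an element of the form $h_{\alpha}(-1)$ that lies in $\operatorname{Im}(N_w)$ precisely when $-1$ is a square in $\F_q$, producing the condition $q\equiv 3\pmod{4}$. All remaining cases in Theorem~\ref{th:F4}(ii)--(iv) do admit a lift of order $|w|$ because either $\tilde w^{|w|}=1$ outright or the corresponding product of hyperbolic reflections factors through $\operatorname{Im}(N_w)$ for every odd $q$. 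The main technical obstacle is the bookkeeping in the middle step: choosing convenient representatives of each Weyl class, tracking the signs $h_{r_i}(-1)$ through long Tits words in $F_4$, and then comparing $\tilde w^{|w|}$ with $\operatorname{Im}(N_w)$ inside the rational torus $T$ class by class, with careful attention to the parity of $q\pmod{4}$ that governs the relevant $2$-part of the cokernel.
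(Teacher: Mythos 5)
Your proposal follows essentially the same route as the paper: the identity $(t\tilde w)^{|w|}=N_w(t)\,\tilde w^{|w|}$ is exactly Lemma~\ref{conjugation}(iii), and the paper likewise settles each remaining class by deciding whether $N_w(H)=\tilde w^{-|w|}$ is solvable for $H\in T$, with the obstructions you describe ($\tilde w^{4}=h_3\notin\operatorname{Im}N_w$ for $w_{16}w_3w_2$ and $w_{21}w_8w_3w_2$; the existence of a square root of $-1$ in the relevant coordinate, hence $q\equiv1\pmod 4$, for $w_3w_2$ and $w_2w_1w_{16}$) matching its computations. One small correction: a complement $M$ to $T$ in $N(G,T)$ is isomorphic to $C_W(w)$, not to $W$, but since $w\in C_W(w)$ your conclusion that the existence of a complement yields a lift of order $|w|$ still stands.
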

We illustrate the results of these theorems in Tables~\ref{t:main} and \ref{lifts_F4}.
Note that both theorems are true for the groups $F_4(2)$ and $2.F_4(2)$ (in the notation of \cite{Atlas}).
For $q>2$, there is only one finite group of Lie type $F_4$ over a field of order $q$. It is clear, if an element of $N(G,T)$ is a lift of $w$ then it is also a lift to $N_{\overline{G}}(\overline{T})$.

This paper is organized as follows.
In Section 2 we give basic definitions and introduce notation used in this paper. In Section 3 we prove auxiliary results and explain how we use MAGMA in the proofs. Section 4 is devoted to the proofs of Theorems~\ref{th:F4} and \ref{th2:F4}. Finally, in Section 5 we give additional information about maximal tori and lifts in two tables.

\section{Notation and preliminary results}

If a group $G$ is a product of a normal subgroup $N$ and a subgroup $K$ then $K$ is called a {\it supplement} to $N$ in $G$. If $G=NK$ and $N\cap K=1$, then $K$ is called a {\it complement} to $N$ in $G$.

By $q$ we denote some power of a prime $p$. We write $\overline{\F}_p$ for an algebraic closure of a finite field $\F_p$. The symmetric group on $n$ elements is denoted by $S_n$, the cyclic group of order $n$ by $\mathbb{Z}_n$, and the dihedral group of order $2n$ by $D_{2n}$. 
Following~\cite{CarSG}, we write $x^y=yxy^{-1}$ and $[x,y]=y^xy^{-1}$. 

By $\overline{G}$ we denote a simple connected linear algebraic group over $\overline{\F}_p$ 
with the root system $\Phi$ of Lie type $F_4$. We assume that 
$\Delta=\{r_1,r_2,r_3,r_4\}$ is a fundamental system of $\Phi$.

We follow notation from~\cite{CarSG}, in particular, the definitions of elements $x_r(t)$,
$n_r(\lambda)$ ($r\in\Phi$, $t\in\overline{\mathbb{F}}_p$, $\lambda\in\overline{\mathbb{F}}_p^\ast$). In contrast to~\cite{CarSG}, the Magma convention is that $h_r(\lambda) = n_r(-1)n_r(\lambda)$ and we use this definition. According to~\cite{CarSG}, $\overline{G}$ is generated by $x_r(t)$: $\overline{G}=\langle x_r(t)~|~r\in\Phi, t\in\overline{\mathbb{F}}_p\rangle$. The group 
$\overline{T}=\langle h_r(\lambda)~|~r\in\Delta,\lambda\in\overline{\mathbb{F}}_p^\ast \rangle$
is a maximal torus of $\overline{G}$ and $\overline{N}=\langle\overline{T}, n_r~|~r\in\Delta \rangle$, where $n_r=n_r(1)$, is the normalizer of $\overline{T}$ in $\overline{G}$ \cite[\S 7.1, 7.2]{CarSG}. By $W$ we denote the Weyl group $\overline{N}/\overline{T}$ and by $\pi$ the natural homomorphism from $\overline{N}$ onto $W$. Throughout, we denote by $\sigma$
the classical Frobenius automorphism defined on the generators as follows:
$$ x_r(t)\mapsto x_r(t^q), r\in\Phi, t\in\overline{\mathbb{F}}_p^\ast. $$
In particular, we have $n_r^\sigma=n_r$ and $(h_r(\lambda))^\sigma=h_r(\lambda^q)$.
Define the action of $\sigma$ on $W$ in the natural way. Elements $w_1, w_2\in W$ are called {\it $\sigma$-conjugate} if $w_1=(w^{-1})^{\sigma}w_2w$ for some $w$ in $W$.
The following statements hold for $G=\overline{G}_\sigma$.

\begin{prop}{\em\cite[Propositions 3.3.1, 3.3.3]{Car}}\label{torus}.
A torus $\overline{T}^g$ is $\sigma$-stable if and only if $g^{\sigma}g^{-1}\in\overline{N}$. The map $\overline{T}^g\mapsto\pi(g^{\sigma}g^{-1})$ determines a bijection between the $G$-classes of $\sigma$-stable maximal tori of $\overline{G}$ and the $\sigma$-conjugacy classes of $W$.
\end{prop}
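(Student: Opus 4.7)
The plan is to follow the standard Lang--Steinberg argument due to Steinberg, splitting the proposition into the characterization of $\sigma$-stability, well-definedness and surjectivity of the map on $G$-classes, and finally injectivity.

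First, for the $\sigma$-stability statement, I would observe that $\overline{T}$ itself is $\sigma$-stable because $\sigma$ acts on the generators by $h_r(\lambda)\mapsto h_r(\lambda^q)\in\overline{T}$. Consequently $(\overline{T}^{g})^{\sigma}=(\overline{T}^{\sigma})^{g^{\sigma}}=\overline{T}^{g^{\sigma}}$, and this equals $\overline{T}^{g}$ exactly when $g^{\sigma}g^{-1}$ lies in $N_{\overline{G}}(\overline{T})=\overline{N}$.

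For the second part, I would next check that the map $\overline{T}^{g}\mapsto\pi(g^{\sigma}g^{-1})$ descends to $\sigma$-conjugacy classes in $W$. If $\overline{T}^{h}$ is $G$-conjugate to $\overline{T}^{g}$, then $h=ngk$ for some $n\in\overline{N}$ and some $k\in G=\overline{G}_{\sigma}$; using $k^{\sigma}=k$, one computes $h^{\sigma}h^{-1}=n^{\sigma}(g^{\sigma}g^{-1})n^{-1}$, and passing through $\pi$ gives exactly a $\sigma$-conjugation in $W$ in the sense of the paper. For surjectivity, given $w\in W$, I would choose any lift $n\in\overline{N}$ with $\pi(n)=w$ and invoke the Lang--Steinberg theorem: since $\overline{G}$ is connected and $\sigma$ has finite fixed-point group, the morphism $x\mapsto x^{\sigma}x^{-1}$ is surjective on $\overline{G}$, so we can write $n=g^{\sigma}g^{-1}$ for some $g\in\overline{G}$, and by the first part $\overline{T}^{g}$ is a $\sigma$-stable torus whose class maps to $w$.

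For injectivity, suppose $\pi(g^{\sigma}g^{-1})$ and $\pi(h^{\sigma}h^{-1})$ are $\sigma$-conjugate via some $u\in W$, lifted to $n\in\overline{N}$. Then $h^{\sigma}h^{-1}$ and $n^{\sigma}(g^{\sigma}g^{-1})n^{-1}$ coincide modulo $\overline{T}$; applying Lang--Steinberg this time to the connected $\sigma$-stable torus $\overline{T}$, one absorbs that residual $\overline{T}$-factor into an element $t\in\overline{T}$ so that $h_0:=tng$ satisfies $h_0^{\sigma}h_0^{-1}=h^{\sigma}h^{-1}$. Then $hh_0^{-1}\in\overline{G}_{\sigma}=G$, and since $t\in\overline{T}$ and $n\in\overline{N}$ both normalize $\overline{T}$, one has $\overline{T}^{h_0}=\overline{T}^{g}$, exhibiting $\overline{T}^{g}$ and $\overline{T}^{h}$ as $G$-conjugate. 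The main obstacle, and the only non-formal ingredient, is the repeated use of Lang--Steinberg: once on $\overline{G}$ to produce the torus corresponding to a prescribed $w$, and once on the connected group $\overline{T}$ to remove the torus ambiguity in the injectivity step.
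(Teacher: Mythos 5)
This proposition is quoted from Carter \cite[Propositions 3.3.1, 3.3.3]{Car} and the paper supplies no proof of its own; your argument is the standard Lang--Steinberg proof, i.e.\ exactly the route taken in that source (stability via $\overline{T}^{g^\sigma}=\overline{T}^g$, surjectivity by solving $g^\sigma g^{-1}=n$ in $\overline{G}$, injectivity by a second application of Lang--Steinberg to $\overline{T}$ with the twisted Frobenius), and it is essentially correct. The only blemish is at the end of the injectivity step: from $h_0^{\sigma}h_0^{-1}=h^{\sigma}h^{-1}$ the $\sigma$-fixed element is $h^{-1}h_0$ (equivalently $h_0^{-1}h$), not $hh_0^{-1}$, which still exhibits $\overline{T}^{h}$ as a $G$-conjugate of $\overline{T}^{h_0}=\overline{T}^{g}$.
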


\begin{prop}{\em\cite[Lemma 1.2]{ButGre}}\label{prop2.5}.
Let $n=g^{\sigma}g^{-1}\in\overline{N}$. Then $(\overline{T}^g)_\sigma=(\overline{T}_{\sigma n})^g$, where $n$ acts on $\overline{T}$ by conjugation.
\end{prop}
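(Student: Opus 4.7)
The plan is to exhibit, via conjugation by $g$, an isomorphism $\varphi\colon\overline{T}\to\overline{T}^g$ that carries $\overline{T}_{\sigma n}$ bijectively onto $(\overline{T}^g)_\sigma$. Since $\varphi$ is a group isomorphism identifying the two underlying tori, the statement reduces to checking that a typical element $t\in\overline{T}$ satisfies $\varphi(t)^\sigma=\varphi(t)$ if and only if $t$ is fixed by the twisted endomorphism $\sigma n$ of $\overline{T}$.

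First, I would rewrite the defining equation $n=g^\sigma g^{-1}$ as $g^\sigma=ng$ and compute $\varphi(t)^\sigma$ by distributing $\sigma$ through the conjugate expression for $\varphi(t)$, using that $\sigma$ is a group endomorphism of $\overline{G}$. Substituting $g^\sigma=ng$ and cancelling the action of $\varphi$ on both sides of the fixed-point equation $\varphi(t)^\sigma=\varphi(t)$, one is left with an identity in $\overline{T}$ of the form $t^\sigma=ntn^{-1}$ (up to a choice of side of conjugation).

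Second, $\sigma n$ should be interpreted as the endomorphism of $\overline{T}$ given by $\sigma$ followed by the inner automorphism induced by $n$; this is well defined because the hypothesis $n\in\overline{N}$ means $n$ normalises $\overline{T}$. Unfolding the condition $t\in\overline{T}_{\sigma n}$ with this definition produces exactly the same equation $t^\sigma=ntn^{-1}$. Hence the two conditions coincide and $\varphi(\overline{T}_{\sigma n})=(\overline{T}^g)_\sigma$, which is the desired equality.

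The main obstacle is simply bookkeeping of conventions: one must pin down once and for all the side convention for $x^y$ and the composition order in the symbol $\sigma n$, and then apply them consistently through both calculations above. Observe that the hypothesis $n\in\overline{N}$ plays a double role: via Proposition~\ref{torus} it guarantees that $\overline{T}^g$ is $\sigma$-stable in the first place, and it also ensures that conjugation by $n$ restricts to an automorphism of $\overline{T}$, so that $\sigma n$ is a well-defined endomorphism of $\overline{T}$ whose fixed-point set $\overline{T}_{\sigma n}$ makes sense.
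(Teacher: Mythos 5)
The paper does not prove this proposition at all --- it is quoted verbatim from \cite[Lemma~1.2]{ButGre} --- so there is no in-paper argument to compare against; your direct verification (conjugate a generic $t\in\overline{T}$ by $g$, apply $\sigma$, substitute $g^\sigma=ng$, and cancel) is the standard proof and is correct. One remark: the ``choice of side'' you flag at the end is the only genuinely delicate point, and it is not purely cosmetic. If one insists on the paper's declared convention $x^y=yxy^{-1}$ (so $\overline{T}^g=g\overline{T}g^{-1}$) together with $n=g^\sigma g^{-1}$, the cancellation yields $t^\sigma$ conjugated by $g^{-1}ng$ rather than by $n$, and the statement does not come out in the advertised form; the clean identity $n^{-1}t^\sigma n=t$, hence $(\overline{T}^g)_\sigma=(\overline{T}_{\sigma n})^g$, is obtained with the convention $x^y=y^{-1}xy$ used in the cited sources, applied consistently to both $\overline{T}^g$ and the action of $n$. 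So your closing sentence about fixing the conventions once and applying them uniformly is exactly where the content of the lemma lives.
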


\begin{prop}{\em\cite[Proposition 3.3.6]{Car}}\label{p:normalizer}.
Let $g^{\sigma}g^{-1}\in\overline{N}$ and $\pi(g^{\sigma}g^{-1})=w$. Then $$(N_{\overline{G}}({\overline{T}}^g))_{\sigma}/({\overline{T}}^g)_{\sigma}\simeq C_{W,\sigma}(w)=\{x\in W~|~(x^{-1})^{\sigma}wx=w\}.$$
\end{prop}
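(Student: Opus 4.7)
The plan is to transport the $\sigma$-action on the twisted objects $\overline{T}^g$ and $\overline{N}^g$ back to the reference torus $\overline{T}$ and its normalizer $\overline{N}$ via the conjugation isomorphism $c_g\colon x \mapsto gxg^{-1}$, obtaining a twisted Frobenius $F$ on $\overline{N}$ whose fixed-coset structure is governed by $w$. Since $\overline{T}^g$ is $\sigma$-stable, so is $\overline{N}^g = N_{\overline{G}}(\overline{T}^g)$, and $c_g$ restricts to isomorphisms $\overline{N} \to \overline{N}^g$ and $\overline{T} \to \overline{T}^g$. Pulling back $\sigma$ along $c_g$ yields the endomorphism $F := c_g^{-1}\sigma c_g$ of $\overline{N}$; using $g^\sigma = ng$ one computes
\[
F(x) = g^{-1}(gxg^{-1})^\sigma g = (g^{-1}g^\sigma)\,x^\sigma\,(g^{-1}g^\sigma)^{-1} = n'\,x^\sigma\,(n')^{-1},
\]
where $n' := g^{-1}g^\sigma = g^{-1}ng$. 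The $\sigma$-stability of $\overline{T}^g$ translates into $(g^{-1}g^\sigma)\,\overline{T}\,(g^{-1}g^\sigma)^{-1}=\overline{T}$, so $n' \in \overline{N}$, and $F$ preserves both $\overline{T}$ and $\overline{N}$. Under $c_g$ this gives $(\overline{N}^g)_\sigma \cong \overline{N}^F$ and $(\overline{T}^g)_\sigma \cong \overline{T}^F$, reducing the claim to $\overline{N}^F/\overline{T}^F \simeq C_{W,\sigma}(w)$.

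Since $F$ stabilizes $\overline{T}$, it descends to $W = \overline{N}/\overline{T}$ as the twisted map $v \mapsto \tilde w\, v^\sigma\,\tilde w^{-1}$ with $\tilde w := \pi(n')$, so its fixed cosets form a $\sigma$-centralizer of the form $\{v\in W : v^\sigma = \tilde w^{-1}v\tilde w\}$ in the sense of the statement. To match this with $C_{W,\sigma}(w)$, I would invoke Proposition \ref{torus}: replacing $g$ by $gh$ with $h\in\overline{N}$ changes $n$ within a single $\sigma$-conjugacy class in $\overline{N}$, and the same is true of $n'$, so $w$ and $\tilde w$ represent the same $\sigma$-conjugacy class of $W$; since $\sigma$-conjugate elements of $W$ have isomorphic $\sigma$-centralizers, the two descriptions of the quotient coincide up to a canonical isomorphism.

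For surjectivity of the natural map $\overline{N}^F/\overline{T}^F \to $ ($F$-fixed cosets in $W$), pick a lift $x_0 \in \overline{N}$ of such a coset; then $x_0^{-1}F(x_0)$ lies in $\overline{T}$ (its image in $W$ is trivial), so the Lang--Steinberg theorem applied to the connected torus $\overline{T}$ with the isogeny $F|_{\overline{T}}\colon s\mapsto n'\,s^\sigma\,(n')^{-1}$ produces $s\in\overline{T}$ with $s^{-1}F(s) = x_0^{-1}F(x_0)$; then $x := x_0 s^{-1}$ is $F$-fixed and $gxg^{-1}$ is a preimage in $(\overline{N}^g)_\sigma$. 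Injectivity is immediate since two $F$-fixed lifts of the same coset differ by an element of $\overline{T}^F$. The main technical obstacle is the bookkeeping in the identification step: tracking conventions precisely enough to confirm that the twisted $\sigma$-centralizer produced by the pullback coincides with the $C_{W,\sigma}(w)$ in the statement rather than a variant involving a $\sigma$-conjugate or inverse of $w$; once this is settled, the rest is a standard Lang--Steinberg argument on a connected torus.
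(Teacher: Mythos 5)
Your overall strategy --- transport $\sigma$ back to $\overline{N}$ along $c_g$, descend the twisted Frobenius to $W$, and apply Lang--Steinberg to the connected torus $\overline{T}$ for surjectivity --- is the standard proof of Carter's Proposition 3.3.6. The paper does not reprove this statement (it is quoted from \cite{Car}), but the chain of isomorphisms it displays immediately afterwards, based on Proposition~\ref{prop2.5}, is exactly your reduction to a twisted fixed-point quotient $\overline{N}_{\sigma n}/\overline{T}_{\sigma n}$; your Lang--Steinberg step and the injectivity argument are correct as written.

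The obstacle you flag at the end is, however, a genuine gap, and your proposed repair does not close it. With your convention $\overline{T}^g=g\overline{T}g^{-1}$, the element governing $\sigma$-stability and the twist is $n'=g^{-1}g^\sigma=g^{-1}ng$, and this is genuinely different from $n=g^\sigma g^{-1}$: the hypothesis $n\in\overline{N}$ does not even force $n'\in\overline{N}$. For instance, in $\GL_3$ with $g$ having rows $(1,\theta,0)$, $(1,\theta^q,0)$, $(0,0,1)$ for $\theta\in\F_{q^2}\setminus\F_q$ of nonzero trace, $g^\sigma g^{-1}$ is the permutation matrix of $(1\,2)$ while $g^{-1}g^\sigma$ is a non-monomial upper triangular matrix; here $g\overline{T}g^{-1}$ is not $\sigma$-stable (only $g^{-1}\overline{T}g$ is). Your bridge --- that $n$ and $n'$ each vary within a single $\sigma$-conjugacy class as $g$ varies, hence determine the same class --- is a non sequitur: two well-defined invariants of the torus need not be the same invariant, and in fact $g\mapsto gh$ moves $n=g^\sigma g^{-1}$ by left multiplication by the $\overline{G}$-conjugate $(h^\sigma h^{-1})^{g^\sigma}$, not by $\sigma$-conjugation inside $\overline{N}$. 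The fix is notational rather than mathematical: Propositions~\ref{torus} and~\ref{prop2.5} are stated for the convention $\overline{T}^g=g^{-1}\overline{T}g$ (with $g\overline{T}g^{-1}$ the example above shows Proposition~\ref{torus} would be false). With that convention your pullback becomes $F(x)=n^{-1}x^\sigma n$, the induced map on $W$ is $v\mapsto w^{-1}v^\sigma w$, and its fixed cosets are literally $\{v\in W\mid v^\sigma=wvw^{-1}\}=C_{W,\sigma}(w)$; no identification between $w$ and a conjugate of $\tilde w^{\pm1}$ is needed, and the rest of your argument applies verbatim.
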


By Proposition~\ref{prop2.5}, we have $({\overline{T}}^g)_{\sigma}=(\overline{T}_{\sigma n})^g$ and $(N_{\overline{G}}({\overline{T}}^g))_{\sigma}=(\overline{N}^g)_{\sigma}=(\overline{N}_{\sigma n})^g$. Hence, 

$$C_{W,\sigma}(w)\simeq (N_{\overline{G}}({\overline{T}}^g))_{\sigma}/({\overline{T}}^g)_{\sigma}=(\overline{N}_{\sigma n})^g/(\overline{T}_{\sigma n})^g\simeq\overline{N}_{\sigma n}/\overline{T}_{\sigma n}.$$

\begin{remark}\label{r:nonsplit}
Let $n$ and $w$ be as in Propositions~\ref{prop2.5} and~\ref{p:normalizer}, respectively. Suppose that $n_1=g_1^{\sigma}g_1^{-1}\in\overline{N}$ and $\pi(n_1)=w$. Since $n$ and $n_1$ act on $\overline{T}$ in the same way, we have
$({\overline{T}}^{g_1})_{\sigma}=(\overline{T}_{\sigma n_1})^{g_1}=(\overline{T}_{\sigma n})^{g_1}$ and $(N_{\overline{G}}({\overline{T}}^{g_1}))_{\sigma}=(\overline{N}^{g_1})_{\sigma}=(\overline{N}_{\sigma n_1})^{g_1}$.
Hence, 
$$C_{W,\sigma}(w)\simeq (N_{\overline{G}}({\overline{T}}^{g_1}))_{\sigma}/({\overline{T}}^{g_1})_{\sigma}=(\overline{N}_{\sigma n_1})^{g_1}/(\overline{T}_{\sigma n})^{g_1}\simeq\overline{N}_{\sigma n_1}/\overline{T}_{\sigma n}.$$
Thus, $(\overline{T}^g)_\sigma$ has a complement in its algebraic normalizer if and only if there exists a complement to $\overline{T}_{\sigma n}$ in $\overline{N}_{\sigma n_1}$ for some $n_1$ with $\pi(n_1)=w$. Similarly, if $w$ has a lift to $\overline{N}_{\sigma n_1}$ of order $|w|$
then $w$ has such lift to $N(G,T)$ as well.
\end{remark}

For simplicity of notation, we write $h_r$ for $h_r(-1)$. If $r=r_i$ then $h_i$ stands for $h_{r_i}$ and $n_i$ stands for $n_{r_i}$. Every element $H$ of $\overline{T}$ can be uniquely written in the form $h_{r_1}(\lambda_1)h_{r_2}(\lambda_2)h_{r_3}(\lambda_3)h_{r_4}(\lambda_4)$ and we write briefly $H= (\lambda_1,\lambda_2,\lambda_3,\lambda_4)$.

The group $\mathcal{T}=\langle n_r~|~ r\in\Delta\rangle$ is called {\it the Tits group} \cite{AdamsHe}. Denote $\mathcal{H}=\overline{T}\cap\mathcal{T}$. It is known
that $\mathcal{H}=\langle h_r~|~r\in\Delta\rangle$ and so $\mathcal{H}$ is an elementary abelian group of order $2^4$ such that $\mathcal{T}/\mathcal{H}\simeq W$. 

\begin{remark}\label{r:p=2}
Observe that if $p=2$ then $h_r=1$ for every $r\in\Delta$, in particular, $\mathcal{H}=1$ and $\mathcal{T}\simeq W$. Moreover, the restriction $\tilde{\pi}$ of the homomorphism $\pi$ to $\mathcal{T}$ is an isomorphism between $\mathcal{T}$ and $W$. Let $T$ be a maximal torus corresponding to $w\in W$. Then $n=\tilde{\pi}^{-1}(w)$ is a lift of $w$ to $\overline{N}_{\sigma n}$ of the same order and $\tilde{\pi}^{-1}(C_W(w))$ is a complement to $\overline{T}_{\sigma n}$ in $\overline{N}_{\sigma n}$. Therefore, Theorems~\ref{th:F4}-\ref{th2:F4} hold in the case of even characteristic by Remark~\ref{r:nonsplit}.
\end{remark}

Similarly to~\cite[Theorem 7.2.2]{CarSG}, we have:
\begin{center}
$n_s n_r n_s^{-1}=n_{w_s(r)}(\eta_{s,r}),\quad \eta_{s,r}=\pm1,$
\end{center}
\begin{center}
$n_s h_r(\lambda)n_s^{-1}=h_{w_s(r)}(\lambda).$
\end{center}

We choose values of $\eta_{r,s}$ as follows. 
Let $r\in\Phi$ and $r=\sum\limits_{i=1}^4\alpha_i r_i$. The sum of the coefficients $\sum\limits_{i=1}^4\alpha_i$ is called the {\it height} of $r$. We fix the following total ordering of positive roots: we write $r\prec s$ if either $h(r)<h(s)$ or 
$h(r)=h(s)$ and the first nonzero coordinate of $s-r$ is positive.

Recall that a pair of positive roots $(r,s)$ is called {\it special} if $r+s\in\Phi$ and $r\prec s$. 
A pair $(r,s)$ is called {\it extraspecial} if it is special and for any special pair $(r_1, s_1)$ such that $r+s=r_1+s_1$ one has $r\preccurlyeq r_1$. Let $N_{r,s}$ be the structure constants of the corresponding simple Lie algebra~\cite[Section 4.1]{CarSG}. Then the signs of $N_{r,s}$ may be taken arbitrarily at the extraspecial pairs and then all other structure constants are uniquely determined \cite[Proposition~4.2.2]{CarSG}. In our case, we choose $\sgn(N_{r,s})=+$ for all extraspecial pairs $(r,s)$.
The numbers $\eta_{r,s}$ are uniquely determined by the structure constants \cite[Proposition~6.4.3]{CarSG}.

\section{Preliminaries: calculations}

We use MAGMA~\cite{MAGMA} to calculate products of elements in $\overline{N}=N_{\overline{G}}(\overline{T})$. 
All calculations can be performed using online Magma Calculator~\cite{MC} as well.
At the moment it uses Magma V2.25-5. 
All preparatory commands can be found in \cite{github}.
It is straightforward to verify the defined above ordering gives the same set of extraspecial pairs as in MAGMA. Thus, our calculations in $\overline{N}$ correspond to the ordering and structure constants defined in the previous section. For calculations in $W$, we use GAP \cite{GAP}.
We widely use the following two results for groups $\overline{N}_{\sigma{n}}$ with
$n\in\overline{N}$.

\begin{lemma}{\em \cite[Lemma 1]{GS}}\label{normalizer}
Let $g\in\overline{G}$ and $n=g^\sigma g^{-1}\in\overline{N}$. Suppose that $u\in\mathcal{T}$ and $H\in\overline{T}$. Then

(i) $Hu\in\overline{N}_{\sigma n}$ if and only if   $H=H^{\sigma n}[n,u];$

(ii) If $H\in \mathcal{H}$ then $Hu\in\overline{N}_{\sigma n}$ if and only if   $[n,Hu]=1$.
\end{lemma}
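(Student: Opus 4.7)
The set $\overline{N}_{\sigma n}$ is, by the derivation preceding Remark~\ref{r:nonsplit}, the fixed-point set of the twisted Frobenius $\tau\colon x\mapsto nx^{\sigma}n^{-1}$ acting on $\overline{N}$. My approach is therefore a direct computation: expand the equation $\tau(Hu)=Hu$ and rearrange it into the form stated in (i); then specialize to $H\in\mathcal{H}$ to recognize the commutator shape in (ii).

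For (i), the two ingredients I need are already recorded in the excerpt. Since $\sigma$ fixes every generator $n_r$, we have $u^{\sigma}=u$ for all $u\in\mathcal{T}$; and the convention $H^{\sigma n}$ means applying $\sigma$ first and then conjugating by $n$. Splitting the twisted conjugate gives
\[
n(Hu)^{\sigma}n^{-1}=(nH^{\sigma}n^{-1})(nun^{-1})=H^{\sigma n}\cdot u^{n}.
\]
The commutator convention $[n,u]=u^{n}u^{-1}$ yields $u^{n}=[n,u]\,u$, so the equation $Hu=H^{\sigma n}[n,u]\,u$ is equivalent, after cancelling $u$ on the right, to $H=H^{\sigma n}[n,u]$, which is exactly~(i).

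For (ii), the hypothesis $H\in\mathcal{H}=\langle h_r\mid r\in\Delta\rangle$ gives $H^{\sigma}=H$, since each $h_r=h_r(-1)$ is $\sigma$-fixed because $(-1)^q=-1$. Thus $H^{\sigma n}=H^{n}$, and the criterion of (i) reads $H=H^{n}[n,u]$. A short direct expansion
\[
[n,Hu]=(Hu)^{n}(Hu)^{-1}=H^{n}u^{n}u^{-1}H^{-1}=H^{n}[n,u]\,H^{-1}
\]
identifies this with the equation $[n,Hu]=1$, which yields (ii).

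The only delicate point --- and what I would flag as the main obstacle --- is not the algebra itself but checking that the identities live in the correct subgroup. A priori $[n,u]$ lies only in $\overline{N}$, whereas the right-hand side of (i) must sit in $\overline{T}$ alongside $H$. Projecting the membership $Hu\in\overline{N}_{\sigma n}$ to $W$ forces $\pi(u)$ to commute with $w=\pi(n)$, which is exactly the condition ensuring $[n,u]\in\overline{T}$; once this compatibility is recorded, the two short manipulations above complete the proof.
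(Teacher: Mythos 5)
Your argument is correct and is the standard direct computation: expand $(Hu)^{\sigma n}=H^{\sigma n}u^{n}$ using $u^{\sigma}=u$, rewrite $u^{n}=[n,u]u$, cancel $u$, and for (ii) use $H^{\sigma}=H$ for $H\in\mathcal{H}$ to identify the resulting condition with $[n,Hu]=1$. Note that the paper itself gives no proof of Lemma~\ref{normalizer} --- it is quoted from \cite{GS} --- and your computation is exactly the expected one; the ``delicate point'' you flag at the end is harmless but not really needed, since the equality $H=H^{\sigma n}[n,u]$ already forces $[n,u]\in\overline{T}$ whenever it holds, so no separate projection to $W$ is required for the equivalence.
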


\begin{lemma}\label{commutator}
Suppose that $T$ and $N$ are subgroups of a group $G$ such that $T$ is abelian and 
$N\leq N_G(T)$. Let $a=H_1u_1, b=H_2u_2$, where $H_1, H_2\in T$ and $u_1,u_2\in N$. Then
$$ab=ba \Leftrightarrow H_1^{-1}H_1^{u_2}\cdot u_2u_1u_2^{-1}u_1^{-1}=H_2^{-1}H_2^{u_1}.$$ 
In particular, if $[u_1,u_2]=1$ then
$ab=ba \Leftrightarrow H_1^{-1}H_1^{u_2}=H_2^{-1}H_2^{u_1}.$
\end{lemma}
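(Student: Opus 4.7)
The plan is to prove the equivalence by direct rearrangement, exploiting only the normality of $T$ in $N$ (which lets us commute elements of $N$ past elements of $T$ at the cost of a conjugation) and the hypothesis that $T$ is abelian.

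First I would rewrite both products so that all torus elements appear on the left. Using the convention $x^y = yxy^{-1}$ from the paper, the identity $u_1 H_2 = (u_1 H_2 u_1^{-1})\, u_1 = H_2^{u_1} u_1$ gives
\[
ab = H_1 u_1 H_2 u_2 = H_1 H_2^{u_1}\, u_1 u_2, \qquad
ba = H_2 u_2 H_1 u_1 = H_2 H_1^{u_2}\, u_2 u_1.
\]
Next, $ab=ba$ is equivalent to $H_1 H_2^{u_1} u_1 u_2 = H_2 H_1^{u_2} u_2 u_1$, i.e.
\[
H_1 (H_1^{u_2})^{-1} H_2^{u_1} H_2^{-1} = u_2 u_1 u_2^{-1} u_1^{-1},
\]
where I have moved the $u$-part of $ba$ to the right and the $T$-part of $ba$ to the left. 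Because $T$ is abelian, I can freely reorder the four torus factors on the left; multiplying by $H_1^{u_2}H_1^{-1}$ on the left yields
\[
H_2^{u_1} H_2^{-1} = H_1^{u_2} H_1^{-1}\cdot u_2 u_1 u_2^{-1} u_1^{-1},
\]
which, after replacing $H_i^{u_j} H_i^{-1}$ by the equal element $H_i^{-1} H_i^{u_j}$ (abelianness again), is exactly the claimed identity.

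The ``in particular'' clause is immediate: when $[u_1,u_2]=1$ in the group-theoretic sense $u_1u_2=u_2u_1$, the factor $u_2u_1u_2^{-1}u_1^{-1}$ is trivial and the equivalence collapses to $H_1^{-1}H_1^{u_2} = H_2^{-1}H_2^{u_1}$. There is no real obstacle: the whole argument is a one-line manipulation, and the only subtlety is keeping track of the paper's conjugation convention $x^y=yxy^{-1}$ (opposite to some sources) so that the side on which the conjugating element is multiplied is correct.
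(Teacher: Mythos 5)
Your proof is correct, and the paper in fact states Lemma~\ref{commutator} without any proof at all, treating it as a routine computation; your rearrangement (normalizing to put the torus parts on the left via $u H = H^u u$, then using commutativity of $T$ to regroup) is exactly the implicit argument, carried out with the paper's convention $x^y = yxy^{-1}$ handled correctly.
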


The following result is our main tool for calculations
of powers and commutators of elements in $\overline{N}$.

\begin{lemma}\label{conjugation} Suppose that $\Phi$ is a root system of type $F_4$
and $r_1$, $r_2$, $r_3$, $r_4$ are fundamental roots. Let $w_r$ be a reflection, where $r\in\Phi$,
and $A=(a_{ij})$ is the matrix of $w_r$ in the basis $r_1$, $r_2$, $r_3$, $r_4$.
Assume that $H=(\lambda_1,\lambda_2,\lambda_3,\lambda_4)$ be an element of $\overline{T}$ and 
\[ B = \left( \begin{matrix} a_{11} & a_{12} & 2a_{13} & 2a_{14} \\ 
a_{21} & a_{22} & 2a_{23} & 2a_{24} \\ 
a_{31}/2 & a_{32}/2 & a_{33} & a_{34} \\ 
a_{41}/2 & a_{42}/2 & a_{43} & a_{44}  \end{matrix}  \right).\]
Then the following claims hold:
 
(i) $H^{n_r}=(\lambda_1',\lambda_2',\lambda_3',\lambda_4')$,
where $\lambda_i'=\lambda_1^{b_{i1}}\cdot\lambda_2^{b_{i2}}\cdot\lambda_3^{b_{i3}}\cdot\lambda_4^{b_{i4}}$;

(ii) if $a,b\in\mathcal{T}$ and 
$H^a=(\lambda_1^{c_{11}}\lambda_2^{c_{12}}\lambda_3^{c_{13}}\lambda_4^{c_{14}},
\lambda_1^{c_{21}}\lambda_2^{c_{22}}\lambda_3^{c_{23}}\lambda_4^{c_{24}},
\lambda_1^{c_{31}}\lambda_2^{c_{32}}\lambda_3^{c_{33}}\lambda_4^{c_{34}},
\lambda_1^{c_{41}}\lambda_2^{c_{42}}\lambda_3^{c_{43}}\lambda_4^{c_{44}}),$
\\$H^b=(\lambda_1^{d_{11}}\lambda_2^{d_{12}}\lambda_3^{d_{13}}\lambda_4^{d_{14}},
\lambda_1^{d_{21}}\lambda_2^{d_{22}}\lambda_3^{d_{23}}\lambda_4^{d_{24}},
\lambda_1^{d_{31}}\lambda_2^{d_{32}}\lambda_3^{d_{33}}\lambda_4^{d_{34}},
\lambda_1^{d_{41}}\lambda_2^{d_{42}}\lambda_3^{d_{43}}\lambda_4^{d_{44}})$
then $$H^{ab}=(\lambda_1^{e_{11}}\lambda_2^{e_{12}}\lambda_3^{e_{13}}\lambda_4^{e_{14}},
\lambda_1^{e_{21}}\lambda_2^{e_{22}}\lambda_3^{e_{23}}\lambda_4^{e_{24}},
\lambda_1^{e_{31}}\lambda_2^{e_{32}}\lambda_3^{e_{33}}\lambda_4^{e_{34}},
\lambda_1^{e_{41}}\lambda_2^{e_{42}}\lambda_3^{e_{43}}\lambda_4^{e_{44}}),$$
where the matrix $(e_{ij})$ is the product of matrices $(c_{ij})$ and $(d_{ij})$;

(iii) $(Hn)^m=(\lambda_1',\lambda_2',\lambda_3',\lambda_4')n^m$,
where $m$ is a positive integer,  $\lambda_i'=\lambda_1^{c_{i1}}\cdot\lambda_2^{c_{i2}}\cdot\lambda_3^{c_{i3}}\cdot\lambda_4^{c_{i4}}$ and $c_{ij}$ are elements of the matrix $\sum\limits_{t=0}^{m-1}B^t$.

\end{lemma}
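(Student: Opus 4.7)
The plan is to reduce all three claims to two facts: the relation $n_r h_s(\lambda) n_r^{-1} = h_{w_r(s)}(\lambda)$ quoted in the paper, and the expression of an arbitrary $h_s(\lambda)$ in terms of the fundamental $h_{r_i}$'s. Since $\overline{G}$ is simply connected, the cocharacter lattice of $\overline{T}$ is the coroot lattice with $\mathbb{Z}$-basis $r_1^\vee,\ldots,r_4^\vee$, and $h_s$ corresponds to the coroot $s^\vee$. Writing $s=\sum_j c_j r_j$ and $r_j^\vee = 2r_j/(r_j,r_j)$, a direct computation in $F_4$ (with $r_1,r_2$ long and $r_3,r_4$ short) shows: for $s$ long, $s^\vee = c_1 r_1^\vee + c_2 r_2^\vee + (c_3/2)r_3^\vee + (c_4/2)r_4^\vee$; for $s$ short, $s^\vee = 2c_1 r_1^\vee + 2c_2 r_2^\vee + c_3 r_3^\vee + c_4 r_4^\vee$. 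Hence $h_s(\lambda) = \prod_i h_{r_i}(\lambda^{b_i})$ with integer exponents $b_i$ determined by the length types of $s$ and $r_i$.

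For part (i), I would write $H = \prod_j h_{r_j}(\lambda_j)$ and conjugate factor by factor, obtaining $H^{n_r} = \prod_j h_{w_r(r_j)}(\lambda_j)$. The $j$-th column of $A$ gives the coordinates of $w_r(r_j)$, and $w_r$ preserves root lengths, so the coroot formula above expands each $h_{w_r(r_j)}(\lambda_j)$ contributing exponent $a_{ij}$, $2a_{ij}$, or $a_{ij}/2$ on $h_{r_i}$, according to whether $r_i,r_j$ have equal length, $r_i$ is long and $r_j$ short, or $r_i$ is short and $r_j$ long. Collecting like factors (the $h_{r_i}(\cdot)$ commute and are multiplicative in the argument) reproduces exactly the matrix $B$ in the statement. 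Part (ii) is then the observation that conjugation by an element of $\mathcal{T}$ acts $\mathbb{Z}$-linearly on the cocharacter lattice, and in the exponent coordinates $(\lambda_1,\ldots,\lambda_4)$ the composition $H^{ab} = (H^b)^a$ corresponds to matrix multiplication of the individual conjugation matrices. For part (iii), an easy induction gives $(Hn)^m = H \cdot H^n \cdot H^{n^2} \cdots H^{n^{m-1}} \cdot n^m$; by (ii) the exponent matrix of $H^{n^t}$ is $B^t$, and multiplying elements of $\overline{T}$ adds their exponent matrices, so the product has matrix $\sum_{t=0}^{m-1} B^t$.

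The main obstacle will be the bookkeeping in part (i): one must justify the factors of $2$ and $1/2$ in the coroot expansion, and in particular verify that the halved entries $a_{3j}/2$ and $a_{4j}/2$ for $j=1,2$ are integers. This amounts to showing that a long root of $F_4$, when written in the basis $r_1,\ldots,r_4$, has even coefficients on the short simple roots $r_3,r_4$, which follows by reducing the length equation $(s,s)=2$ modulo $2$: the cross terms involving $r_1,r_2$ vanish mod $2$, leaving $c_3^2 + c_3 c_4 + c_4^2 \equiv 0 \pmod 2$, whose only solution over $\mathbb{F}_2$ is $c_3 \equiv c_4 \equiv 0$. Once this integrality is secured, the remaining content of the lemma is formal: parts (ii) and (iii) are immediate consequences of the linearity of the conjugation action on the cocharacter lattice and a straightforward induction.
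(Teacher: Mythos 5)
Your proposal is correct and follows essentially the same route as the paper: the paper's manipulation of the characters $\chi_{s,\lambda}(a)=\lambda^{2(s,a)/(s,s)}$ is exactly your coroot expansion $s^\vee=\tfrac{2s}{(s,s)}$ with the length-dependent factors of $2$ and $1/2$, and parts (ii) and (iii) are handled identically (composition as matrix product, then the telescoping identity $(Hn)^m=H\cdot H^n\cdots H^{n^{m-1}}n^m$). Your explicit verification that the halved entries are integers (a long root has even coefficients on $r_3,r_4$) is a point the paper leaves implicit, and is a worthwhile addition.
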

\begin{proof} First, we prove $(i)$. Observe that $h_s(\lambda)^{n_r}=h_{w_r(s)}(\lambda)$.
Then 
$$H^{n_r}=h_{r_1}(\lambda_1)^{n_r}h_{r_2}(\lambda_2)^{n_r}h_{r_3}(\lambda_3)^{n_r}h_{r_4}(\lambda_4)^{n_r}=h_{w_r(r_1)}(\lambda_1)^{n_r}h_{w_r(r_2)}(\lambda_2)^{n_r}h_{w_r(r_3)}(\lambda_3)^{n_r}h_{w_r(r_4)}(\lambda_4)^{n_r}.$$
Now $w_r(r_i)=a_{1i}r_1+a_{2i}r_2+a_{3i}r_3+a_{4i}r_4$. If  $s=w_r(r_1)$ then
$$\chi_{s,\lambda_1}(a)=\lambda_1^\frac{2(s,a)}{(s,s)}=\lambda_1^\frac{a_{11}2(r_1,a)}{(s,s)}\lambda_1^\frac{a_{21}2(r_2,a)}{(s,s)}\lambda_1^\frac{a_{31}2(r_3,a)}{(s,s)}\lambda_1^\frac{a_{41}2(r_4,a)}{(s,s)}.$$ 
Since $(s,s)=(r_1,r_1)=(r_2,r_2)=2(r_3,r_3)=2(r_4,r_4)$, we have
$$\chi_{s,\lambda_1}(a)=\chi_{r_1,\lambda_1^{a_{11}}}(a)\chi_{r_2,\lambda_1^{a_{21}}}(a)\chi_{r_3,\lambda_1^{a_{31}/2}}(a)\chi_{r_4,\lambda_1^{a_{41}/2}}(a)$$ 
and hence
$h_{w_r(r_1)}(\lambda_1)^{n_r}=(\lambda_1^{a_{11}},\lambda_1^{a_{21}},\lambda_1^{a_{31}/2},\lambda_1^{a_{41}/2})$. Similarly, we obtain
$h_{w_r(r_2)}(\lambda_2)^{n_r}=(\lambda_2^{a_{12}},\lambda_2^{a_{22}},\lambda_2^{a_{32}/2},\lambda_2^{a_{42}/2})$.
From the equalities $$(w_r(r_3),w_r(r_3))=(r_1,r_1)/2=(r_2,r_2)/2=(r_3,r_3)=(r_4,r_4),$$ 
we have $$\chi_{w_r(r_3),\lambda_3}(a)=\chi_{r_1,\lambda_3^{2a_{13}}}(a)\chi_{r_2,\lambda_3^{2a_{23}}}(a)\chi_{r_3,\lambda_3^{a_{33}}}(a)\chi_{r_4,\lambda_3^{a_{43}}}(a).$$
Then $h_{w_r(r_3)}(\lambda_3)^{n_r}=(\lambda_3^{2a_{13}},\lambda_3^{2a_{23}},\lambda_3^{a_{33}},\lambda_3^{a_{43}})$ and, similarly, $h_{w_r(r_4)}(\lambda_4)^{n_r}=(\lambda_4^{2a_{14}},\lambda_4^{2a_{24}},\lambda_4^{a_{34}},\lambda_4^{a_{44}})$. Finally, we find that
\begin{multline*}
H^{n_r}=(\lambda_1^{a_{11}},\lambda_1^{a_{21}},\lambda_1^{a_{31}/2},\lambda_1^{a_{41}/2})(\lambda_2^{a_{12}},\lambda_2^{a_{22}},\lambda_2^{a_{32}/2},\lambda_2^{a_{42}/2})(\lambda_3^{2a_{13}},\lambda_3^{2a_{23}},\lambda_3^{a_{33}},\lambda_3^{a_{43}})(\lambda_4^{2a_{14}},\lambda_4^{2a_{24}},\lambda_4^{a_{34}},\lambda_4^{a_{44}})=\\=
(\lambda_1^{b_{11}}\lambda_2^{b_{12}}\lambda_3^{b_{13}}\lambda_4^{b_{14}},\lambda_1^{b_{21}}\lambda_2^{b_{22}}\lambda_3^{b_{23}}\lambda_4^{b_{24}},\lambda_1^{b_{31}}\lambda_2^{b_{32}}\lambda_3^{b_{33}}\lambda_4^{b_{34}},\lambda_1^{b_{41}}\lambda_2^{b_{42}}\lambda_3^{b_{43}}\lambda_4^{b_{44}}),
\end{multline*}
as claimed.

Now we prove $(ii)$. Since $H^{ab}=(H^{b})^a$, 
we have 
$$H^{ab}=(\lambda_1^{d_{11}}\lambda_2^{d_{12}}\lambda_3^{d_{13}}\lambda_4^{d_{14}},
\lambda_1^{d_{21}}\lambda_2^{d_{22}}\lambda_3^{d_{23}}\lambda_4^{d_{24}},
\lambda_1^{d_{31}}\lambda_2^{d_{32}}\lambda_3^{d_{33}}\lambda_4^{d_{34}},
\lambda_1^{d_{41}}\lambda_2^{d_{42}}\lambda_3^{d_{43}}\lambda_4^{d_{44}})^a.$$
According to (i), if $\lambda_1'$ is the first coordinate of $H^{ab}$ then 
\begin{multline*}
\lambda_1'=(\lambda_1^{d_{11}}\lambda_2^{d_{12}}\lambda_3^{d_{13}}\lambda_4^{d_{14}})^{c_{11}}\cdot(\lambda_1^{d_{21}}\lambda_2^{d_{22}}\lambda_3^{d_{23}}\lambda_4^{d_{24}})^{c_{12}}\cdot(\lambda_1^{d_{31}}\lambda_2^{d_{32}}\lambda_3^{d_{33}}\lambda_4^{d_{34}})^{c_{13}}\cdot(\lambda_1^{d_{41}}\lambda_2^{d_{42}}\lambda_3^{d_{43}}\lambda_4^{d_{44}})^{c_{14}}=\\=
\lambda_1^{d_{11}c_{11}+d_{21}c_{12}+d_{31}c_{13}+d_{41}c_{14}}\lambda_2^{d_{21}c_{12}+d_{22}c_{22}+d_{32}c_{13}+d_{42}c_{14}}\lambda_3^{d_{13}c_{11}+d_{23}c_{12}+d_{33}c_{13}+d_{43}c_{14}}\lambda_4^{d_{14}c_{11}+d_{24}c_{12}+d_{34}c_{13}+d_{44}c_{14}}.
\end{multline*}
So if $(e_{ij})$ are the elements of the product of matrices $(c_{ij})$ and $(d_{ij})$ then
$\lambda_1'=\lambda_1^{e_{11}}\lambda_2^{e_{12}},\lambda_3^{e_{13}}\lambda_4^{e_{14}}$. Similarly, we obtain the expressions for other coordinates of $H^{ab}$.

Now we prove $(iii)$. Observe that $(Hn)^m=H^{n^0}H^{n^1}H^{n^2}\ldots H^{n^{m-1}}n^m$. It follows from $(ii)$ that 

$$H^{n^i}=(\lambda_1^{b^i_{11}}\lambda_2^{b^i_{12}}\lambda_3^{b^i_{13}}\lambda_4^{b^i_{14}},\lambda_1^{b^i_{21}}\lambda_2^{b^i_{22}}\lambda_3^{b^i_{23}}\lambda_4^{b^i_{24}},\lambda_1^{b^i_{31}}\lambda_2^{b^i_{32}}\lambda_3^{b^i_{33}}\lambda_4^{b^i_{34}},\lambda_1^{b^i_{41}}\lambda_2^{b^i_{42}}\lambda_3^{b^i_{43}}\lambda_4^{b^i_{44}}),$$
where $b^i_{jk}$ are element of the matrix $B^i$. Multiplying these expressions for $H^{n^i}$
for $i\in\{0,\ldots,m-1\}$, we obtain the required equation for $(Hn)^m$.
\end{proof}

Since we widely use this lemma in the proofs, we illustrate its applying with the following example.

\noindent{\bf Example.} Consider elements $w=w_3w_2$ and $n=n_3n_2$. 
Then it is easy to see that the matrices $A$ and $B$ for $w$ are the following.
$$A=\left(\begin{matrix}
1 & 0 & 0 & 0 \\
1 & -1 & 1 & 0 \\
2 & -2 & 1 & 1 \\
0 & 0 & 0 & 1
\end{matrix}\right), 
B=\left(\begin{matrix}
1 & 0 & 0 & 0 \\
1 & -1 & 2 & 0 \\
1 & -1 & 1 & 1 \\
0 & 0 & 0 & 1
\end{matrix}\right)  .$$
Let $H=(\lambda_1, \lambda_2, \lambda_3, \lambda_4)$ be an arbitrary element of $\overline{T}$. 
Then by Lemma~\ref{conjugation}(i), we can use the rows of $B$ to compute $H^n$, namely $$H^{n}=(\lambda_1,\lambda_1\lambda_2^{-1}\lambda_3^2,\lambda_1\lambda_2^{-1}\lambda_3\lambda_4,\lambda_4).$$
Now consider $C=B^0+B+B^2+B^3$. We find that
$$C=\left(\begin{matrix}
4 & 0 & 0 & 0 \\
4 & 0 & 0 & 4 \\
2 & 0 & 0 & 4 \\
0 & 0 & 0 & 4
\end{matrix} \right) .$$
It is easy to see that $n^4=h_3$. Now Lemma~\ref{conjugation}(iii) implies that $$(Hn)^4=(\lambda_1^4,\lambda_1^4\lambda_4^4,\lambda_1^2\lambda_4^4,\lambda_4^4)n^4=(\lambda_1^4,\lambda_1^4\lambda_4^4,-\lambda_1^2\lambda_4^4,\lambda_4^4).$$ 
In particular, if $\alpha$ is an element of $\overline{\mathbb{F}}_p$ such that 
$\alpha^2=-1$ then $((\alpha,1,1,1)n)^4=1$.

As an application of Lemma~\ref{conjugation}, we prove an auxiliary result extensively used for proving of non-existence of complements. This is based on the fact that $w=w_{21}w_8w_3w_2$ does not have lifts to $\overline{N}$ of order $|w|$.

\begin{lemma}\label{nolift} Let $w=w_{21}w_8w_3w_2$ and $n=n_{21}n_8n_3n_2$. Consider an element $H\in\overline{T}$.
Then the following statements hold:
\begin{enumerate}[(i)]
\item $(Hn)^4=n^4=h_3$;
\item if $u\in\overline{N}$ then  $(Hn^u)^4=h_3^u$.
\end{enumerate}
\begin{proof} By Lemma~\ref{conjugation}(i),
we find that $H^n=(\lambda_1^{-1}, \lambda_1^{-1}\lambda_2^{-1}\lambda_3^2\lambda_4^{-2}, \lambda_2^{-1}\lambda_3\lambda_4^{-1}, \lambda_4^{-1})$.
Then Lemma~\ref{conjugation}(iii) implies that $(Hn)^4=n^4$.
Using MAGMA, we see that $n^4=h_3$. If $u\in\overline{N}$ then $(Hn^u)^4=((H^{u^{-1}}n)^u)^4=((H^{u^{-1}}n)^4)^u=h_3^u$.
\end{proof}
\end{lemma}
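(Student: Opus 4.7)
The plan is to apply Lemma~\ref{conjugation} to pass from the word $w=w_{21}w_8w_3w_2$ to an explicit matrix computation, and then invoke MAGMA only at the one point where a concrete element of $\mathcal{H}$ is needed. For part (i), I would first write down the matrix $A$ of $w$ in the basis $\{r_1,r_2,r_3,r_4\}$ by composing the four reflections, and then form the modified matrix $B$ as prescribed in Lemma~\ref{conjugation} (doubling the top-right $2\times 2$ block, halving the bottom-left block). This gives an explicit formula for $H^{n}$ via Lemma~\ref{conjugation}(i).

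Next, by Lemma~\ref{conjugation}(iii), the torus part of $(Hn)^4$ is governed by the matrix $C=I+B+B^2+B^3$. I would compute $C$ directly and verify that $C=0$. Conceptually this is expected: the element $w$ has order $4$ and is elliptic (no nonzero fixed vector in the root lattice), so $B^4=I$ and $B-I$ is invertible over $\mathbb{Q}$; multiplying $C(B-I)=B^4-I=0$ then forces $C=0$. Either way, once $C=0$ is verified, Lemma~\ref{conjugation}(iii) gives $(Hn)^4=n^4$ independently of $H$. The remaining assertion $n^4=h_3$ is a single concrete identity inside the Tits group $\mathcal{T}$: since $\pi(n)=w$ has order $4$, we know $n^4\in\mathcal{H}$, which is of order $16$, and I would use MAGMA (as described in Section~3) to pin down that the specific element is $h_3$.

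For part (ii), no new calculation is needed. For any $u\in\overline{N}$, conjugation by $u$ is an automorphism, so
\[
(Hn^u)^4 = \bigl((H^{u^{-1}}n)^u\bigr)^4 = \bigl((H^{u^{-1}}n)^4\bigr)^u,
\]
and applying part (i) to the element $H^{u^{-1}}\in\overline{T}$ yields $(H^{u^{-1}}n)^4=h_3$, hence $(Hn^u)^4=h_3^u$.

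The main obstacle is conceptual rather than computational: one must trust that $C=0$, which expresses the fact that $w$ admits no lift of order $|w|$ inside $\overline{N}$. Once this vanishing is established, the entire statement reduces to a single MAGMA evaluation ($n^4=h_3$) plus the formal conjugation identity in part (ii). Everything else is a bookkeeping exercise with the matrices provided by Lemma~\ref{conjugation}.
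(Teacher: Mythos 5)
Your proposal is correct and follows essentially the same route as the paper: reduce to the matrix $B$ via Lemma~\ref{conjugation}, observe that $I+B+B^2+B^3=0$ so that $(Hn)^4=n^4$, check $n^4=h_3$ in MAGMA, and deduce (ii) by the formal conjugation identity. Your extra observation that the vanishing of $C$ follows conceptually from $w$ being elliptic of order $4$ (so $B^4=I$ and $B-I$ is invertible) is a nice justification the paper leaves implicit, but it is the same argument.
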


\section{Proofs of Theorems~\ref{th:F4} and~\ref{th2:F4}}
In this section, we prove Theorem~\ref{th:F4}. Since the case $p=2$ was explained in Remark~\ref{r:p=2},
we suppose that $q$ is odd. We assume that $G$ is a finite group of Lie type $F_4$. 
The Dynkin diagram of type $F_4$ is the following.

\begin{picture}(330,60)(-140,-30)
\put(0,0){\line(1,0){50}} \put(50,1){\line(1,0){50}}\put(50,-1){\line(1,0){50}}
\put(100,0){\line(1,0){50}}
\put(0,0){\circle*{6}} \put(50,0){\circle*{6}}
\put(100,0){\circle*{6}} \put(150,0){\circle*{6}}
\put(0,-10){\makebox(0,0){$r_1$}}
\put(50,-10){\makebox(0,0){$r_2$}}
\put(75,0){\makebox(0,0){$_\rangle$}}
\put(100,-10){\makebox(0,0){$r_3$}}
\put(150,-10){\makebox(0,0){$r_4$}}
\end{picture}

It is known that in this case the Weyl group $W$ is isomorphic to $GO_4^+\simeq 2^{1+4}:(S_3\times S_3)$ (in the notation of \cite{Atlas}). So $W$ has the central involution $w_0$. There are in total 25 conjugacy classes in $W$.
Among its representatives, there are seven $w$ such that $w$ and $ww_0$ are conjugate in $W$.
All the other 18 classes can be divided on pairs $w^W$ and $(ww_0)^W$ for appropriate elements $w$.
We choose representatives of conjugacy classes according to \cite{Law}. In particular, we have $w_0=w_{21}w_8w_6w_3$. We often use that $w_0=w_1w_3w_{14}w_{2}$ as well. 

There are exactly 25 conjugacy classes of maximal tori in $G$. We number them as in Table~\ref{t:main}. 
To prove Theorem~\ref{th:F4}, we consider each conjugacy class of maximal tori separately.
As an element of $W$ corresponding to a conjugacy class of maximal tori, we use $w$
according to Table~\ref{t:main}. For each $w$ we choose an element $n$ such that $\pi(n)=w$ and
find the minimal order of a supplement to $\overline{T}_{\sigma n}$ in $\overline{N}_{\sigma n}$. If there are no complements to $\overline{T}_{\sigma n}$, we find the minimal order of the lifts of $w$ to $\overline{N}_{\sigma n}$ and present a lift of such order.
All calculations used for the proofs can be found in \cite{github}.
Throughout, we denote by $H=(\lambda_1,\lambda_2,\lambda_3,\lambda_4)$ an arbitrary element of the torus $\overline{T}$.

\textbf{Tori 1 and 17}.
In this case $w=1$ or $w=w_0$, respectively. Suppose that $n=1$ and $M$ is a supplement to $T=\overline{T}_{\sigma n}$ in $\overline{N}_{\sigma n}$. Since $w_{21}w_8w_3w_2\in C_W(w)$,
Lemma~\ref{nolift} implies that $h_3\in M\cap T$.
Denote by $x$ a preimage of $w_4$ in $M$. Then we find that $h_3^{x}=h_3^{n_4}=h_3h_4$.
Since $M\cap T$ is a normal subgroup in $M$, we have $\langle h_3, h_4\rangle\leqslant M\cap T$.

Now we show that there exists a supplement $M$ such that $M\cap T=\langle h_3, h_4\rangle$.
It is well known that
$$C_W(w)\simeq W(F_4)\simeq\langle a,b,c,d~|~ a^2, b^2, c^2, d^2, (ab)^3, [a,c], [a,d], (bc)^4, [b,d], (cd)^3\rangle.$$
Moreover, the reflections $w_1$, $w_2$, $w_3$, and $w_4$ generate $C_W(w)$ and satisfy this set of defining relations.
Denote $$a=h_2n_1, b=h_1n_2, c=h_4n_3, \text{ and } d=h_3n_4.$$
We claim that $M=\langle a, b, c, d \rangle $ is a supplement of 
order $4\cdot|C_W(w)|$ to $\overline{T}_{\sigma n}$. Indeed, using MAGMA, we see 
that $a^2=b^2=c^2=d^2=(ab)^3=[a,d]=[b,d]=(cd)^3=1$
and $[a,c]=(bc)^4=h_3$. Moreover, $M$ normalizes $L=\langle h_3, h_4 \rangle$.
So $L$ is a normal subgroup of $M$ and $M/L$ is a homomorphic image of $C_W(w)$.
Therefore, we have $|M|\leq 4\cdot|C_W(w)|$. On the other hand, we know that $M/(M\cap T)\simeq C_W(w)$
and hence $|M|= |M\cap T|\cdot|C_W(w)|\geq 4\cdot|C_W(w)|$.
Thus, $|M|=4\cdot|C_W(w)|$ and $M\cap T=\langle h_3, h_4 \rangle$, as claimed.

Finally, observe that the identity element of $\overline{N}_{\sigma n}$ is a required lift of $w=1$ and $n_0$ is a lift of order two of $w_0$.

\textbf{Tori 2 and 9.} In this case $w=w_2$ or $w=w_2w_0$, respectively.  Using GAP, we see that $$C_W(w)=\langle w_2 \rangle\times\langle w_0 \rangle \times \langle w_4, w_8, w_{13} \rangle\simeq\Z_2\times\Z_2\times S_4 .$$
If $w=w_2$ then we suppose that $n=n_2$ and $\varepsilon=1$, and if 
$w=w_0w_2$ we suppose that $n=n_0n_2$ and $\varepsilon=-1$.
Denote $T=\overline{T}_{\sigma{n}}$.
By Lemma~\ref{conjugation}, we find that
$$H^{n}=(\lambda_1^\varepsilon, \lambda_1^\varepsilon\lambda_2^{-\varepsilon}\lambda_3^{2\varepsilon}, \lambda_3^\varepsilon, \lambda_4^\varepsilon).$$

Assume that $M$ is a supplement to $T$ such that $|T\cap M|$ is minimal. 
 Calculations in GAP show that $(w_{21}w_8w_3w_2)^{w_7}\in C_W(w)$. Then Lemma~\ref{nolift} implies that
$h_4=h_3^{n_7}$ belongs to $M\cap T$. Denote a preimage of $w_{8}$ in $M$ by $a$. Since $M\cap T$
is a normal subgroup of $M$, we find that $h_4^{a}=h_4^{n_{8}}=h_3h_4\in T\cap M$. 
Therefore, we have $L=\langle h_3, h_4 \rangle\leqslant T\cap M$. 

Suppose that $\varepsilon{q}\equiv 1\negthickspace\pmod4$ and there exists a supplement $M$ to $T$ such that $M\cap T=L$.
Using MAGMA, we see that $[n,n_4]=[n,n_8]=[n,n_{13}]=1$ and hence $n_4$, $n_8$, $n_{13}$ belong to $\overline{N}_{\sigma n}$ by Lemma~\ref{normalizer}. Then there exist elements $H_0=(\mu_i)$, $H_1=(\alpha_i)$, $H_2=(\beta_i)$, and 
$H_3=(\gamma_i)$ of $T$ such that $H_0n_0$, $H_1n_2$, $H_2n_4$, and $H_3n_{13}$ belong to $M$.

Since $M/L\simeq C_W(w)$ and $w_2^2=1$, we have $a^2\in L$.
By Lemma~\ref{conjugation}, we find that
$$(Hn_2)^2=(\lambda_1^2,-\lambda_1\lambda_3^2,\lambda_3^2, \lambda_4^2).$$
Therefore, we have $\alpha_1^2=1$ and $\alpha_1\alpha_3^2=-1$.
Since $H_1\in T$, we infer that
$\alpha_1^{\varepsilon{q}-1}=\alpha_3^{\varepsilon{q}-1}=\alpha_4^{\varepsilon{q}-1}=1$
and $\alpha_1^{\varepsilon{q}}\alpha_2^{-\varepsilon{q}}\alpha_3^{2\varepsilon{q}}=~\alpha_2$.
So $\alpha_2^{\varepsilon{q}+1}=\alpha_1\alpha_3^2=-1$. Similarly, for elements $H_0$, $H_2$, and $H_3$ we find that $\mu_2^{\varepsilon{q}+1}=\mu_1\mu_3^2$ and  $\mu_3^{\varepsilon q-1}=\mu_4^{\varepsilon q-1}=\beta_4^{\varepsilon q-1}=\gamma_4^{\varepsilon q-1}=1$.

Since $w_0$ lies in $Z(C_W(w))$ and $[n_0,n_2]=[n_0,n_4]=[n_0,n_{13}]=1$,
Lemma~\ref{commutator} implies that $H_0^{-1}H_0^{n_2}=H_1^{-2}k_1$,
$H_0^{-1}H_0^{n_4}=H_2^{-2}k_2$, and $H_0^{-1}H_0^{n_{13}}=H_3^{-2}k_3$,
where $k_1,k_2,k_3\in L$. By Lemma~\ref{conjugation}, we have
$$H^{-1}H^{n_2}=(1,\lambda_1\lambda_2^{-2}\lambda_3^2,1,1), H^{-1}H^{n_4}=(1,1,1,\lambda_3\lambda_4^{-2}),
H^{-1}H^{n_{13}}=(1,\lambda_1^2\lambda_3^{-2},\lambda_1^2\lambda_3^{-2},\lambda_1\lambda_3^{-1}).$$

Therefore, we obtain $\mu_1\mu_2^{-2}\mu_3^2=\alpha_2^{-2}$, $\mu_3\mu_4^{-2}=\pm\beta_4^{-2}$, and $\mu_1\mu_3^{-1}=\pm\gamma_4^{-2}$.
Then $(\alpha_2^{-2})^{(\varepsilon{q}+1)/2}=\alpha_2^{-(\varepsilon{q}+1)}=-1$ and hence
$\mu_1^{(\varepsilon{q}+1)/2}\mu_2^{-(\varepsilon{q}+1)}\mu_3^{\varepsilon{q}+1}=-1$.
Since $\mu_2^{\varepsilon{q}+1}=\mu_1\mu_3^2$, we have $\mu_1^{(\varepsilon{q}-1)/2}=-\mu_3^{-(\varepsilon{q}-1)}=-1$.
By assumption, we have $(\varepsilon{q}-1)/2$ is even. Then
 $(\mu_3\mu_4^{-2})^{(\varepsilon{q}-1)/2}=(\pm\beta_4^{-2})^{(\varepsilon{q}-1)/2}=\beta_4^{-(\varepsilon{q}-1)}=1$, so $\mu_3^{(\varepsilon{q}-1)/2}=\mu_4^{\varepsilon{q}-1}=1$.
Finally, we see that $(\mu_1\mu_3^{-1})^{(\varepsilon{q}-1)/2}=(\pm\gamma_4^{-2})^{(\varepsilon{q}-1)/2}=\gamma_4^{\varepsilon{q}-1}=1$; a contradiction with $\mu_1^{(\varepsilon{q}-1)/2}=-1=-\mu_3^{(\varepsilon{q}-1)/2}$.

Suppose now that $\varepsilon{q}\equiv-1\negthickspace\pmod4$. We show that there exists a supplement $M$ such that $T\cap M=\langle h_3, h_4\rangle$. Consider an element $\zeta$ of $\overline{\F}_p$ such that $\zeta^{\varepsilon{q}+1}=-1$. Put
$$H_0=(-1,\zeta^{\frac{\varepsilon{q}+3}{2}},1,1),\qquad H_1=(-1,\zeta,1,1).$$
 
We claim that $M=\langle H_1n_2, H_0n_0, n_4,n_8, n_{13} \rangle$ is a required supplement.
Using MAGMA, we see that $[n,n_2]=[n,n_0]=[n,n_4]=[n,n_8]=[n,n_{13}]=1$, and hence $n_2$, $n_0$, $n_4$, $n_8$, and $n_{13}$ are elements of $\overline{N}_{\sigma n}$. Now we verify that $H_0$ and $H_1$ belong to $T$.
By the above equation for $H^n$, we have
$H_0^n=(-1,-\zeta^{-\varepsilon\frac{\varepsilon{q}+3}{2}},1,1)$ and 
$H_1^n=(-1,-\zeta^{-\varepsilon},1,1)$.
Since $\varepsilon{q}+3\equiv2\negthickspace\pmod4$, we infer that 
$$(-\zeta^{-\varepsilon\frac{\varepsilon{q}+3}{2}})^q=-\zeta^{(1-\varepsilon{q}-1)\frac{\varepsilon{q}+3}{2}}=-\zeta^{\frac{\varepsilon{q}+3}{2}}\zeta^{-(1+\varepsilon{q})\frac{\varepsilon{q}+3}{2}}=-\zeta^{\frac{\varepsilon{q}+3}{2}}(-1)^\frac{\varepsilon{q}+3}{2}=\zeta^{\frac{\varepsilon{q}+3}{2}}.$$

Hence, $$H_0^{\sigma n}=H_0 \text{ and }
H_1^{\sigma n}=(-1,-\zeta^{-\varepsilon{q}},1,1)=(-1,(-\zeta^{-1-\varepsilon{q}})\zeta,1,1)=H_1.$$

Using MAGMA, we see that
$h_3^{n_2}=h_3^{n_0}=h_3^{n_8}=h_3$,
$h_3^{n_4}=h_3^{n_{13}}=h_4^{n_8}=h_3h_4$,
and $h_4^{n_2}=h_4^{n_0}=h_4^{n_4}=h_4^{n_{13}}=h_4$.
It follows that $L$ is a normal subgroup of $M$.

Calculations in MAGMA show that $n_4^2=n_{13}^2=(n_4n_{13})^2=h_4$,
$n_8^2=h_3$, and $(n_4n_8)^3=(n_8n_{13})^3=1$. This implies that $L$ is a subgroup of $\langle n_4,n_8, n_{13}\rangle$ and $\langle n_4,n_8, n_{13}\rangle/L\simeq S_4$.

Now we verify that $\langle H_1n_2, H_0n_0 \rangle\simeq\mathbb{Z}_2\times\mathbb{Z}_2$.
We know that $(Hn_0)^2=1$ for every $H\in\overline{T}$.
Since $H^{n_2}=(\lambda_1,\lambda_1\lambda_2^{-1}\lambda_3^{2}, \lambda_3,\lambda_4)$,
we find that $(H_1n_2)^2=H_1H_1^{n_2}h_2=1$.
Lemma~\ref{commutator} implies that $[H_1n_2,H_0n_0]=1$
if and only if $H_1^{-1}H_1^{n_0}=H_0^{-1}H_0^{n_2}$.
Now $H_1^{-1}H_1^{n_0}=(1,\zeta^{-2},1,1)$.
Using the above equation for $H^{n_2}$, we find that
$$H_0^{-1}H_0^{n_2}=(-1,\zeta^{-\frac{\varepsilon{q}+3}{2}},1,1)(-1,-\zeta^{-\frac{\varepsilon{q}+3}{2}},1,1)=(1,-\zeta^{-(\varepsilon{q}+3)},1,1)=(1,\zeta^{-2},1,1).$$ 
Therefore, $[H_1n_2,H_0n_0]=1$ and hence
$\langle H_1n_2, H_0n_0 \rangle\simeq\mathbb{Z}_2\times\mathbb{Z}_2$.

It remains to prove that the commutators of $H_1n_2$ and $H_0n_0$ 
with $n_4$, $n_8$, $n_{13}$ lie in $L$. We see above that $n_2$ and $n_0$ commute with $n_4$, $n_8$, $n_{13}$.
By Lemma~\ref{commutator}, it suffices to verify that 
$H_1^{-1}H_1^{n_4}$, $H_0^{-1}H_0^{n_4}$, $H_1^{-1}H_1^{n_8}$, $H_0^{-1}H_0^{n_8}$, 
$H_1^{-1}H_1^{n_{13}}$, and $H_0^{-1}H_0^{n_{13}}$ are elements of $L$.
By Lemma~\ref{conjugation}, 
$$H^{-1}H^{n_4}=(1, 1, 1, \lambda_3\lambda_4^{-2}),
H^{-1}H^{n_8}=(\lambda_1^{-2}\lambda_4^2, \lambda_1^{-2}\lambda_4^2, \lambda_1^{-1}\lambda_4, 1),
H^{-1}H^{n_{13}}=(1, \lambda_1^2\lambda_3^{-2}, \lambda_1^2\lambda_3^{-2}, \lambda_1\lambda_3^{-1}).$$
Applying these equations to $H_0$ and $H_1$, we, clearly, obtain elements of $L$.
Therefore, we infer that $M/L$ is a homomorphic image of $\mathbb{Z}_2\times\mathbb{Z}_2\times S_4\simeq C_W(w)$. On the other hand, we know that $M/(T\cap M)\simeq C_W(w)$ and hence $L=M\cap T$. Thus, $M$
is a supplement to $T$ of the minimal order in this case.

No we prove that for every $q$ the group $M=\langle n_0,n_2,n_4,n_8,n_{13}\rangle$ is a supplement to $T$ in $\overline{N}_{\sigma n}$ such that $T\cap M=\langle h_2,h_3,h_4\rangle$. As was shown above, $M$ is a subgroup of $\overline{N}_{\sigma n}$ and $\langle n_4,n_8, n_{13}\rangle/\langle h_3,h_4\rangle\simeq S_4$. Moreover, $n_2$ and $n_0$ lie in the center of $M$. So $\langle h_2\rangle$
is a normal subgroup of $M$. Denote $L=\langle h_2, h_3, h_4 \rangle$. Then $L$ is a normal subgroup of $M$ and $M/L\simeq \mathbb{Z}_2\times\mathbb{Z}_2\times S_4\simeq C_W(w)$. Therefore, we infer that $M$ is a supplement to $T$ in $N$. We see above that any supplement to $T$ includes the subgroup $\langle h_3, h_4 \rangle$
and if $\varepsilon{q}\equiv1\negthickspace\pmod4$ then each supplement has order greater than or equal to $8\cdot|C_W(w)|$.
Thus, if $\varepsilon{q}\equiv1\negthickspace\pmod4$ then $M$ is a supplement to $T$ of the minimal order.

We know that $w_2$ and $w_2w_0$ are elements of order two. It is easy to see that $h_1n_2$ and $h_1n_2n_0$ are required lifts of order two.

\textbf{Tori 3 and 10.} In this case $w=w_3$ or $w=w_0w_3$.
Calculations in GAP show that $C_W(w)$ is isomorphic to $\Z_2\times\Z_2\times S_4$ and has the following presentation
$$C_W(w)\simeq\langle a,b,c,d,e~|~a^2, b^2, c^2, d^2, e^2, [a,b], [a,c], [a,d], [a,e], [b,c], [b,d], [b,e], (cd)^3, (de)^3, (ce)^2 \rangle.$$
Moreover, elements $w_3$, $w_{14}w_{21}w_1$, $w_1$, $w_{16}$, and $w_{14}$ generate $C_W(w)$
and satisfy this set of relations.

Consider $n=n_3$. Suppose that $M$ is a supplement to $T=\overline{T}_{\sigma n}$ in $\overline{N}_{\sigma n}$. 
Using GAP, we see that $(w_{21}w_8w_3w_2)^{w_5}\in C_W(w)$.
It follows from Lemma~\ref{nolift} that $h_3=h_3^{n_5}\in M\cap T$. Therefore, we infer that $|M|\geq 2\cdot|C_W(w)|$.

Now we show that there exists a supplement $M$ such that $M\cap T=\langle h_3 \rangle$.
Let $$a=n_3, b=h_4n_{14}n_{21}n_1, c=h_2h_4n_1, d=h_1n_{16}, \text{ and } e=h_2h_4n_{14}.$$ 
We claim that $M=\langle a, b, c, d, e \rangle$ is a required supplement.

Using MAGMA, we see that $L=\langle h_3 \rangle$ is a normal subgroup of $M$.
Moreover, calculations show that
$$[a,b]=[a,c]=[a,d]=[a,e]=[b,c]=[b,d]=[b,e]=c^2=d^2=e^2=(de)^3=1,$$
$$a^2=b^2=(cd)^3=(ce)^2=h_3.$$ 
By Lemma~\ref{normalizer}, elements $a$, $b$, $c$, $d$, and $e$ belong to $\overline{N}_{\sigma n}$. Then $M/L$ is a homomorphic image of $C_W(w)$ and hence $|M|\leq 2\cdot |C_W(w)|$. On the other hand, we know that $|M|\geq 2\cdot|C_W(w)|$.
Thus, $|M|=2\cdot|C_W(w)|$ and $M\cap T=L$, as claimed.

Observe that elements $w_3$ and $w_0w_3$ have order two. Then $h_2n_3$ and $h_2n_0n_3$ are their lifts of order two.

\textbf{Torus 4.} In this case $w=w_6w_3$ and $w$ is conjugate to $ww_0$ in $W$.
Calculations in GAP show that
$$C_W(w)=\langle w_2, w_3\rangle\times\langle w_{21},w_{24} \rangle\simeq D_8\times D_8.$$

Let $n=n_6n_3$ and $M$ be a supplement to $T=\overline{T}_{\sigma n}$ in $\overline{N}_{\sigma n}$. 
Calculations in GAP show that $w_{21}w_8w_3w_2\in C_W(w)$.
Lemma~\ref{nolift} implies that $h_3\in L=M\cap T$ and hence $|L|\geq2$.
Using MAGMA, we see that $[n,n_2]=[n,h_1n_3]=[n,n_{24}]=1$ and hence $n_2$, $h_1n_3$, and $n_{24}$ belong to $\overline{N}_{\sigma n}$.

Now we prove that $|L|>2$. Assume on the contrary that $L=\langle h_3 \rangle$. Suppose that $a=H_1n_2$ and $b=H_2n_{24}$ are preimages of $w_2$ and $w_{24}$,
where $H_1=(\alpha_1,\alpha_2,\alpha_3,\alpha_4)$ and $H_2=(\beta_1,\beta_2,\beta_3,\beta_4)$.

Since $w_2^2=1$, we have $a^2\in L$. By Lemma~\ref{conjugation}, we find that
$(\alpha_1^2,-\alpha_1\alpha_3^2,\alpha_3^2,\alpha_4^2)\in L$. So $\alpha_1^2=\alpha_4^2=1$ and $\alpha_3^2=-\alpha_1$.

By Lemma~\ref{conjugation}, we obtain $H_2^{-1}H_2^{n_2}=H_1^{-1}H_1^{n_{24}}t$, where $t\in L$.
Then $(1,\beta_1\beta_2^{-2}\beta_3^2,1,1)=(\alpha_1^{-2},\alpha_1^{-3},\alpha_1^{-2},\alpha_1^{-1})t$.
Therefore, we have $\alpha_1=1$ and hence $-\alpha_3^2=\beta_1\beta_2^{-2}\beta_3^2=1$.

Since $w_{24}^2=1$, we have $b^2\in L$.  It follows from $n_{24}^2=h_2h_4$ and Lemma~\ref{conjugation}
that 
$b^2=(1, -\beta_1^{-3}\beta_2^2, \beta_1^{-2}\beta_3^2, -\beta_1^{-1}\beta_4^2)$ and hence
$\beta_1^3=-\beta_2^2$, $\beta_1=-\beta_4^2$. We know that $\beta_1\beta_2^{-2}\beta_3^2=1$, so
$\beta_3^2=\beta_1^{-1}\beta_2^2=-\beta_1^2$.
Since $H_1$ and $H_2$ belong to $T$, we have $H_1^{\sigma n}=H_1$ and $H_2^{\sigma n}=H_2$.
By Lemma~\ref{conjugation}, 
$$H^n=(\lambda_1, \lambda_1^2\lambda_2^{-1}\lambda_4^2, \lambda_1\lambda_3^{-1}\lambda_4^2,\lambda_4).$$
Therefore, we have $H_1=H_1^{\sigma n}=(\alpha_1^q, \alpha_1^{2q}\alpha_2^{-q}\alpha_4^{2q}, \alpha_1^q\alpha_3^{-q}\alpha_4^{2q},\alpha_4^q)$. Then $\alpha_1^{q-1}=\alpha_4^{q-1}=1$ and $\alpha_3=\alpha_1\alpha_3^{-q}\alpha_4^{2}$. So $\alpha_3^{q+1}=\alpha_1\alpha_4^2=1$. On the other hand,
$\alpha_3^{q+1}=(\alpha_3^2)^{(q+1)/2}=(-1)^{(q+1)/2}$ and hence $q+1$ is divisible by~4.
Since $H_2=H_2^{\sigma n}=(\beta_1^q, \beta_1^{2q}\beta_2^{-q}\beta_4^{2q}, \beta_1^q\beta_3^{-q}\beta_4^{2q},\beta_4^q)$, we infer that $\beta_1^{q-1}=\beta_4^{q-1}=1$ and $\beta_3=\beta_1^q\beta_3^{-q}\beta_4^{2q}$. Then $\beta_3^{q+1}=\beta_1\beta_4^2=-\beta_1^2=\beta_3^2$. Therefore, $\beta_3^{q-1}=1$. Since $\beta_1^2=-\beta_3^2$, we have $\beta_1^{q-1}=(-1)^{(q-1)/2}\beta_3^{q-1}=(-1)^{(q-1)/2}=-1$; a contradiction. So $|L|>2$ and hence $|L|\geq4$.

Recall that $w$ and $w_0$ are conjugate in $W$. Consider $n=n_6n_3$ if $q+1$ is divisible by 4 and $n=n_0n_6n_3$ if $q-1$ is divisible by 4. Denote $T=\overline{T}_{\sigma n}$.
Suppose that $\alpha$ is an element of $\overline{\F}_p$ such that $\alpha^2=-1$ and $H_1=(1,1,\alpha,1)$.
We claim that $M=\langle H_1n_2, h_1n_3, n_{21}, n_{24} \rangle$ is a supplement to $T$ in $\overline{N}_{\sigma n}$ of order $4\cdot|C_W(w)|$. First, observe that $n_{24}^2=h_2h_4$ and $(h_1n_3)^2=h_3$. Therefore, $L=\langle h_3, h_2h_4 \rangle\leqslant M\cap T$.
Using MAGMA, we see that $h_3^{n_2}=h_3^{n_3}=h_3^{n_{24}}=h_3^{n_{21}}=h_3$,
$(h_2h_4)^{n_2}=(h_2h_4)^{n_3}=(h_2h_4)^{n_{24}}=h_2h_4$ and $(h_2h_4)^{n_{21}}=h_2h_3h_4$. Therefore, $L$ is a normal subgroup of $M$. 

Now we prove that $H_1\in T$. By Lemma~\ref{conjugation}, 
$$H^{n_6n_3}=(\lambda_1, \lambda_1^2\lambda_2^{-1}\lambda_4^2, \lambda_1\lambda_3^{-1}\lambda_4^2,\lambda_4),$$
$$H^{n_0n_6n_3}=(\lambda_1^{-1}, \lambda_1^{-2}\lambda_2\lambda_4^{-2}, \lambda_1^{-1}\lambda_3\lambda_4^{-2},
\lambda_4^{-1}).$$
If $q\equiv-1\negthickspace\pmod4$ then $H_1^n=(1,1,\alpha^{-1},1)$. So 
$$H_1^{\sigma n}=(1,1,\alpha^{-q},1)=(1,1,\alpha\cdot\alpha^{-1-q},1)=(1,1,\alpha\cdot(-1)^{(q+1)/2},1)=H_1.$$
Similarly, if $q\equiv1\negthickspace\pmod 4$ then $H_1^n=(1,1,\alpha,1)$ and hence 
$$H_1^{\sigma n}=(1,1,\alpha^q,1)=(1,1,\alpha\cdot\alpha^{-1+q},1)=(1,1,\alpha\cdot(-1)^{(q-1)/2},1)=H_1.$$
Using MAGMA, we see that $[n,n_2]=[n,h_1n_3]=[n,n_{21}]=[n,n_{24}]=1$ and hence $M$ is a subgroup of $\overline{N}_{\sigma n}$ by Lemma~\ref{normalizer}. 

Now we verify that $M/L\simeq C_W(w)$. Using MAGMA, we see that
$$(h_1n_3)^2=n_{21}^2=[h_1n_3,n_{21}]=(n_{21}n_{24})^4=h_3, n_{24}^2=[h_1n_3,n_{24}]=h_2h_4.$$
So it suffices to prove that $(H_1n_2)^2$, $[H_1n_2,n_{21}]$, $[H_1n_2,n_{24}]$, and $(H_1n_2h_1n_3)^4$ are elements of $L$. Since $n_2^2=h_2$, we have $(Hn_2)^2=(\lambda_1^2, -\lambda_1\lambda_3^2, \lambda_3^2, \lambda_4^2)$
and hence $(H_1n_2)^2=h_3\in L$. Since $[n_2,n_{21}]=[n_2,n_{24}]=1$, Lemma~\ref{commutator} implies that
$[H_1n_2,n_{21}]$, $[H_1n_2,n_{24}]$ belong to $L$ if $H_1=H_1^{n_{21}}=H_1^{n_{24}}$. By Lemma~\ref{conjugation},
$$H^{n_{21}}=(\lambda_1\lambda_4^{-2}, \lambda_2\lambda_4^{-4}, \lambda_3\lambda_4^{-3}, \lambda_4^{-1}),
H^{n_{24}}=(\lambda_1^{-1}, \lambda_1^{-3}\lambda_2, \lambda_1^{-2}\lambda_3, \lambda_1^{-1}\lambda_4).$$
Applying to $H_1$, we see that $H_1=H_1^{n_{21}}=H_1^{n_{24}}$, as required.
Finally, since $(n_2h_1n_3)^4=h_3$, Lemma~\ref{conjugation} implies that 
$(Hn_2h_1n_3)^4=(Hh_1h_2n_2n_3)^4=(\lambda_1^4,\lambda_1^4\lambda_4^4,-\lambda_1^2\lambda_4^4,\lambda_4^4)$
and hence $(H_1n_2h_1n_3)^4=h_3\in L$. Therefore, $M/L$ is a homomorphic image of $C_W(w)$, in particular
$|M|$ divides $4\cdot|C_W(w)|$. On the other hand, $M/(M\cap T)\simeq C_W(w)$ and $L\leqslant M\cap T$. Thus, $|M|=4\cdot|C_W(w)|$, $L=M\cap T$ and $M$ is a supplement of order $4\cdot|C_W(w)|$ to $T$.

Finally, we know that the orders of $w_6w_3$ and $w_6w_3w_0$ equal two. 
Using MAGMA, we see that $h_1n_6n_3$ and $h_1n_6n_3n_0$ are elements of order two and hence they are  required lifts.

\textbf{Torus 5.} In this case $w=w_{16}w_3$ and $w$ is conjugate to $ww_0$ in $W$.
Using GAP, we see that 
$$C_W(w)=\langle w_3, w_6, w_{16}, w_{24}\rangle\simeq\mathbb{Z}_2\times\mathbb{Z}_2\times\mathbb{Z}_2\times\mathbb{Z}_2.$$
Consider $n=n_{16}n_3$ and $T=\overline{T}_{\sigma n}$. Using MAGMA, we see that $[n,n_3]=[n,h_2n_6]=[n,n_{16}]=1$,
so $n_3$, $h_2n_6$, and $n_{16}$ belong to $\overline{N}_{\sigma n}$ by Lemma~\ref{normalizer}.

Suppose that there exists a complement $K$ to $T$. Denote by $H_1$, $H_2$, and $H_3$ elements of $T$ such that $a=H_1n_3$, $b=H_2h_2n_6$, and $c=H_3n_{16}$ belong to $K$. 
Assume that $H_1=(\alpha_i)$, $H_2=(\beta_i)$, $H_3=(\gamma_i)$, where $1\leq i\leq 4$.

By Lemma~\ref{conjugation}, 
$$H^{n_3}=(\lambda_1,\lambda_2,\lambda_2\lambda_3^{-1}\lambda_4,\lambda_4),
H^{n_6}=(\lambda_1,\lambda_1^2\lambda_2^{-1}\lambda_4^2,\lambda_1\lambda_2^{-1}\lambda_3\lambda_4,\lambda_4),
H^{n_{16}}=(\lambda_1,\lambda_1\lambda_2\lambda_4^{-2},\lambda_1\lambda_3\lambda_4^{-2},\lambda_1\lambda_4^{-1}).$$
%$$H^{n_{24}}=(\lambda_1^{-1},\lambda_1^{-3}\lambda_2,\lambda_1^{-2}\lambda_3,\lambda_1^{-1}\lambda_4).$$

Since $n_3^2=h_3$, we infer that $a^2=H_1H_1^{n_3}h_3=(\alpha_1^2,\alpha_2^2,-\alpha_2\alpha_4,\alpha_4^2)$.
So we have $\alpha_1^2=\alpha_2^2=\alpha_4^2=1$ and $\alpha_2\alpha_4=-1$.

Since $n_{16}^2=h_2h_3h_4$, we find that $c^2=(\gamma_1^2,-\gamma_1\gamma_2^2\gamma_4^{-2},-\gamma_1\gamma_3^2\gamma_4^{-2},-\gamma_1)$.
This implies that $\gamma_1=-1$ and $\gamma_2^2=\gamma_4^2=\gamma_3^2$.

Using MAGMA, we see that $[n_3,n_{16}]=1$. Lemma~\ref{commutator} implies that
$H_1^{-1}H_1^{n_{16}}=H_3^{-1}H_3^{n_3}$. From the above equations for $H^{n_3}$ and $H^{n_{16}}$, we infer that
$(1,\alpha_1\alpha_4^{-2},\alpha_1\alpha_4^{-2},\alpha_1\alpha_4^{-2})=(1,1,\gamma_2\gamma_3^{-2}\gamma_4,1)$.
This implies that $\alpha_1\alpha_4^{-2}=1$ and hence $\alpha_1=1$. Then $\gamma_3^2=\gamma_2\gamma_4$. We see above that
$\gamma_3^2=\gamma_2^2$, so $\gamma_2=\gamma_4$.

Using MAGMA, we find that $[h_2n_6,n_{16}]=1$. Lemma~\ref{commutator} implies that
$H_2^{-1}H_2^{n_{16}}=H_3^{-1}H_3^{n_6}$. From the above equations for $H^{n_6}$ and $H^{n_{16}}$, we infer that
$(1,\beta_1\beta_4^{-2},\beta_1\beta_4^{-2},\beta_1\beta_4^{-2})=(1,\gamma_1^2\gamma_2^{-2}\gamma_4^2,\gamma_1\gamma_2^{-1}\gamma_4,1)$. Since $\gamma_2=\gamma_4$ and $\gamma_1=-1$, we have $(1,\beta_1\beta_4^{-2},\beta_1\beta_4^{-2},\beta_1\beta_4^{-2})=(1,1,-1,1)$; a contradiction.

Consider $n=h_1n_{16}n_3$ and $T=\overline{T}_{\sigma n}$. Denote 
$$a=n_3, b=n_6, c=h_1n_{16}, \text{ and } d=h_1h_2n_{24}.$$
We claim that $M=\langle a, b, c, d\rangle$ is a supplement to $T$
in $\overline{N}_{\sigma n}$ of order $2\cdot|C_W(w)|$.
Using MAGMA, we see that $[n,a]=[n,b]=[n,c]=[n,d]=1$
and hence $a$, $b$, $c$, and $d$ belong to $\overline{N}_{\sigma n}$.
Moreover, $a^2=h_3$, so $L=\langle h_3\rangle\leqslant M\cap T$.
Now calculations show that
$$h_3^{a}=h_3^{b}=h_3^{c}=h_3^{d}=h_3,\quad
a^2=b^2=[a,b]=[a,d]=[b,c]=[c,d]=h_3,$$ and $c^2=d^2=[a,c]=[b,d]=1$.
So $L$ is a normal subgroup of $M$ and $M/L$ is a homomorphic image of $C_W(w)$.
On the other hand, we know that $M/(M\cap T)\simeq C_W(w)$.
Thus, $|M|=2\cdot |C_W(w)|$ and $M\cap T=L$, as claimed.

Finally, we see that $(h_1h_2n_{16}n_3)^2=1$ and hence it is a required lift of order two of $w$.

\textbf{Tori 6 and 21.} In this case $w=w_2w_1$ or $w=w_0w_2w_1$, respectively. 
Calculations in GAP show that $$C_W(w)=\langle w_0w_2w_1\rangle\times\langle w_4,w_{19}\rangle\simeq \mathbb{Z}_6\times S_3.$$
Consider $n=n_2n_1$ and denote $a=n_0n_2n_1$, $b=h_3h_4n_4$, and $c=h_4n_{19}$. Using MAGMA, we see that $[n,a]=[n,b]=[n,c]=1$
and hence $a$, $b$, and $c$ belong to $\overline{N}_{\sigma n}$ by Lemma~\ref{normalizer}.
We claim that $K=\langle a,b,c\rangle$ is a complement to $\overline{T}_{\sigma n}$.
Calculations in MAGMA show that $a^{6}=1=[a,b]=[a,c]=1$
and $b^2=c^2=(bc)^3=1$. So $K$ is a homomorphic image of $\mathbb{Z}_6\times S_3$. On the other hand, $C_W(w)\simeq\overline{N}_{\sigma n}/\overline{T}_{\sigma n}\simeq K/(K\cap\overline{T}_{\sigma n})$.
So $K\simeq C_W(w)$ and hence $K$ is a complement to $\overline{T}_{\sigma n}$ in $\overline{N}_{\sigma n}$ as well as to 
$\overline{T}_{\sigma n_0n}$ in $\overline{N}_{\sigma n_0n}$.

\textbf{Tori 7 and 20.} In this case $w=w_3w_4$ or
$w=w_0w_3w_4$, respectively. Calculations in GAP show that
$$C_W(w)=\langle w_0w_3w_4\rangle\times \langle w_1,w_{24}\rangle\simeq \mathbb{Z}_6\times S_3.$$
Consider $n=n_3n_4$. Denote elements $a=n_0n_3n_4$, $b=h_2h_4n_1$, and $c=h_1n_{24}$. 
Using MAGMA, we see that $[n,a]=[n,b]=[n,c]=1$ and hence $a$, $b$, and $c$ belong to $\overline{N}_{\sigma n}$ by Lemma~\ref{normalizer}. We claim that $K=\langle a,b,c\rangle$ is a complement to $\overline{T}_{\sigma n}$. Calculations in MAGMA show that $a^{6}=[a,b]=[a,c]=1$
and $b^2=c^2=(bc)^3=1$. So $K$ is a homomorphic image of $\mathbb{Z}_6\times S_3$. 
On the other hand, it true that $K/(K\cap\overline{T}_{\sigma n})\simeq C_W(w)$.
Therefore, we have $K\simeq C_W(w)$ and hence $K$ is a complement to $\overline{T}_{\sigma n}$ in $\overline{N}_{\sigma n}$ as well as to 
$\overline{T}_{\sigma n_0n}$ in $\overline{N}_{\sigma n_0n}$.

\textbf{Tori 8 and 19.} In this case $w=w_3w_2$ or $w=w_0w_3w_2$, respectively.
Moreover, 
$$C_W(w)=\langle w_3w_2 \rangle\times\langle w_8, w_{24}\rangle\simeq \mathbb{Z}_4\times D_8.$$
Let $n=n_3n_2$ and $\varepsilon=1$ if $w=w_3w_2$ and $n=n_0n_3n_2$
and $\varepsilon=-1$ if $w=w_0w_3w_2$.

Let $M$ be a supplement to $T=\overline{T}_{\sigma n}$ in $\overline{N}_{\sigma n}$. Since $x=w_{21}w_8w_3w_2\in C_W(w)$,
Lemma~\ref{nolift} implies that $h_3\in M$ and hence $h_3\in M\cap T$.

We show that there exists a supplement $M$ such that $M\cap T=\langle h_3 \rangle$.
Let $a=n_3n_2$ and $b=h_2n_8$. Using MAGMA, we see that
$[n,a]=[n,b]=[n, n_{24}]=1$ and hence $a$, $b$, and $n_{24}$ belong to $\overline{N}_{\sigma{n}}$.
Moreover, it is true that $h_3=h_3^{a}=h_3^{b}=h_3^{n_{24}}$ and hence $\langle h_3\rangle$
is a normal subgroup in $\overline{N}_{\sigma{n}}$. Now we see that $a^4=b^2=h_3$ and $[a,b]=1$. So it is sufficient to find an element $c=H_3n_{24}$, where $H_3\in{T}$, such that $[a,c]$, $c^2$, and $(cb)^4$ are elements of $L=\langle h_3 \rangle$. Indeed, let $M=\langle a,b,c\rangle$. Then $M/L$ is a homomorphic image of the group $\mathbb{Z}_4\times D_8$. 
On the other hand, we know that $M/(M\cap T)=C_W(w)$ and $L\leqslant M\cap T$, so $M\cap T=L$ and $M/L\simeq C_W(w)$. 

By Lemma~\ref{conjugation},
$$H^n=(\lambda_1^\varepsilon, \lambda_1^\varepsilon\lambda_2^{-\varepsilon}\lambda_3^{2\varepsilon}, \lambda_1^\varepsilon\lambda_2^{-\varepsilon}\lambda_3^\varepsilon\lambda_4^\varepsilon,
\lambda_4^\varepsilon). $$

Suppose that $\varepsilon{q}\equiv1\negthickspace\pmod 4$. Then put $H_3=(-1,-1,\alpha,1)$, where  $\alpha\in\overline{\F}_p$ such that  $\alpha^2=-1$. Then $H_3^n=(-1,-1,\alpha^\varepsilon,1)$.
Since $\alpha^{\varepsilon{q}}=(\alpha^2)^{\frac{\varepsilon{q}-1}{2}}\alpha=(-1)^{\frac{\varepsilon{q}-1}{2}}\alpha=\alpha$, we have 
 $H_3^{\sigma n}=(-1,-1,\alpha^{\varepsilon{q}},1)=H_3$. Therefore, we infer that $H_3\in T$ and hence $c=H_3n_{24}\in\overline{N}_{\sigma{n}}$.

By the above equation, we find that $H_3^{n_3n_2}=H_3$. It follows from  Lemma~\ref{commutator} that $[a,c]=1$. 

Since $n_{24}^2=h_2h_4$, Lemma~\ref{conjugation} implies that
$$(Hn_{24})^2=(1,-\lambda_1^{-3}\lambda_2^2, \lambda_1^{-2}\lambda_3^2,-\lambda_1^{-1}\lambda_4^2).$$
Applying this equation to $H_3$, we get $c^2=(1,1,-1,1)=h_3\in L$.
Using MAGMA, we see that $(n_{24}h_2n_8)^4=h_3$. Now Lemma~\ref{conjugation} implies that
$$(Hn_{24}h_2n_8)^4=(1, \lambda_1^{-4}\lambda_2^4\lambda_4^{-4}, -\lambda_1^{-2}\lambda_3^4\lambda_4^{-4},1).$$
Therefore, $(cb)^4=(1,1,-1,1)=h_3\in L$. Thus, in this case $a$, $b$, and $c$ generate a supplement of order $2\cdot|C_W(w)|$.

Suppose that $\varepsilon{q}\equiv-1\negthickspace\pmod 4$. Let $\zeta$ be an element of $\overline{\F}_p$ such that
$\zeta^{\frac{q^2-1}{2}}=-1$ and put 
$$H_3=(\zeta^{\varepsilon{q}+1},\zeta^{\frac{3(\varepsilon{q}+1)^2}{4}},
\zeta^{\frac{q^2-1}{4}}\zeta^{\varepsilon{q}+1},\zeta^{\frac{(\varepsilon{q}+1)^2}{4}}).$$
Then we have 
\begin{multline*}
H_3^{n}=(\zeta^{{q}+\varepsilon},
\zeta^{{q}+\varepsilon}\zeta^{\frac{-3\varepsilon(\varepsilon{q}+1)^2}{4}}(-1)\zeta^{2{q}+2\varepsilon},
\zeta^{{q}+\varepsilon}\zeta^{\frac{-3\varepsilon(\varepsilon{q}+1)^2}{4}}\zeta^{\varepsilon\frac{q^2-1}{4}}\zeta^{{q}+\varepsilon}\zeta^{\varepsilon\frac{(\varepsilon{q}+1)^2}{4}},\zeta^{\varepsilon\frac{(\varepsilon{q}+1)^2}{4}})=\\
(\zeta^{{q}+\varepsilon},
-\zeta^{\frac{-3\varepsilon{q}^2+6q+9\varepsilon}{4}},
\zeta^{\frac{-\varepsilon{q}^2+4q+5\varepsilon}{4}},
\zeta^{\frac{\varepsilon{q}^2+2q+\varepsilon}{4}}).
\end{multline*}
Now we find coordinates of $H_3^{\sigma n}$. We see that $$\left(\zeta^{q+\varepsilon}\right)^q=\zeta^{q^2+\varepsilon{q}}=\zeta^{1+\varepsilon{q}},$$
$$(-\zeta^{\frac{-3\varepsilon{q}^2+6q+9\varepsilon}{4}})^q=\zeta^{\frac{-3\varepsilon{q}^3+6q^2+9\varepsilon{q}}{4}}=-\zeta^{\frac{3{q}^2+6\varepsilon{q}+3}{4}}
\zeta^{\frac{3(q^2-1)(1-\varepsilon{q})}{4}}=
-\zeta^{\frac{3(\varepsilon{q}+1)^2}{4}}(-1)^{\frac{3(1-\varepsilon{q})}{2}}=\zeta^{\frac{3(\varepsilon{q}+1)^2}{4}},$$
$$(\zeta^{\frac{-\varepsilon{q}^2+4q+5\varepsilon}{4}})^q=
\zeta^{\frac{-\varepsilon{q}^3+4q^2+5\varepsilon{q}}{4}}=
\zeta^{\frac{q^2-1}{4}}\zeta^{\varepsilon{q}+1}\zeta^{\frac{(q^2-1)(3-\varepsilon{q})}{4}}=
\zeta^{\frac{q^2-1}{4}}\zeta^{\varepsilon{q}+1}(-1)^{\frac{(3-\varepsilon{q})}{2}}=
\zeta^{\frac{q^2-1}{4}}\zeta^{\varepsilon{q}+1},$$
$$(\zeta^{\frac{\varepsilon{q}^2+2q+\varepsilon}{4}})^q=\zeta^{\frac{\varepsilon{q}^3+2q^2+\varepsilon{q}}{4}}=\zeta^{\frac{(\varepsilon{q}+1)^2}{4}}\zeta^{\frac{(q^2-1)(\varepsilon{q}+1)}{4}}=\zeta^{\frac{(\varepsilon{q}+1)^2}{4}}(-1)^{\frac{(\varepsilon{q}+1)}{2}}=\zeta^{\frac{(\varepsilon{q}+1)^2}{4}}.$$ 
Thus, $H_3^{\sigma n}=H_3$ and hence $H_3\in T$.

Since $[a,n_{24}]=1$, Lemma~\ref{commutator} implies that 
$[a,c]\in L$ if $H_3^{-1}H_3^{n_3n_2}\in L$. We have 
$$H^{-1}H^{n_3n_2}=(1,\lambda_1\lambda_2^{-2}\lambda_3^2,\lambda_1\lambda_2^{-1}\lambda_4,1).$$ 
Therefore,
$$H_3^{-1}H_3^{n_3n_2}=(1,\zeta^{\varepsilon{q}+1}\zeta^{\frac{-3(\varepsilon{q}+1)^2}{2}}\zeta^{\frac{q^2-1}{2}}\zeta^{2\varepsilon{q}+2},\zeta^{\varepsilon{q}+1}\zeta^{\frac{-3(\varepsilon{q}+1)^2}{4}}\zeta^{\frac{(\varepsilon{q}+1)^2}{4}},1)=(1,-\zeta^{\frac{-3q^2+3}{2}},\zeta^{\frac{-q^2+1}{2}},1)=h_3.$$
Applying the above equation for $(Hn_{24})^2$ to $H_3$, we get 
$$c^2=(1,-\zeta^{-3\varepsilon{q}-3}\zeta^{\frac{3(\varepsilon{q}+1)^2}{2}},\zeta^{-2\varepsilon{q}-2}\zeta^{\frac{q^2-1}{2}}\zeta^{2\varepsilon{q}+2},-\zeta^{-\varepsilon{q}-1}\zeta^{\frac{(\varepsilon{q}+1)^2}{2}})=(1,-\zeta^{\frac{3q^2-3}{2}},-1,-\zeta^{\frac{q^2-1}{2}})=h_3.$$

Finally, since
$$(Hn_{24}h_2n_8)^4=(1, \lambda_1^{-4}\lambda_2^4\lambda_4^{-4}, -\lambda_1^{-2}\lambda_3^4\lambda_4^{-4},1),$$
we have 
$$(cb)^4=(1,\zeta^{-4\varepsilon{q}-4}\zeta^{3(\varepsilon{q}+1)^2}\zeta^{-(\varepsilon{q}+1)^2},-\zeta^{-2\varepsilon{q}-2}\zeta^{q^2-1}\zeta^{4\varepsilon{q}+4}\zeta^{-(\varepsilon{q}+1)^2},1)=(1,\zeta^{2q^2-2},-\zeta^0,1)=h_3\in L.$$ 
Thus, in this case $a$, $b$, and $c$ generate a supplement of order $2\cdot|C_W(w)|$, as required.

It is easy to see that $w_0w_3w_2$ is conjugate to $w_{21}w_8w_3w_2$ in $W$.
It follows from Lemma~\ref{nolift} that the minimal order of a lift of 
$w_0w_3w_2$ equals eight. 

Now we prove that there exists a lift of order four of $w_3w_2$ if and only if $q\equiv1\negthickspace\pmod4$.
Let $n=n_3n_2$. Using MAGMA, we see that $n^4=h_3$. By Lemma~\ref{conjugation},

$$H^n=(\lambda_1, \lambda_1\lambda_2^{-1}\lambda_3^2, \lambda_1\lambda_2^{-1}\lambda_3\lambda_4, \lambda_4),$$
$$(Hn)^4=(\lambda_1^4, \lambda_1^4\lambda_4^4, -\lambda_1^2\lambda_4^4, \lambda_4^4).$$

Let $q\equiv-1\pmod 4$. Assume that $(H_1n)^4=1$, where $H_1\in T$ and $H_1=(\mu_1,\mu_2,\mu_3,\mu_4)$.
Then $\mu_4^4=1$ and $\mu_1^2\mu_4^4=-1$. So $\mu_1^2=-1$. Since $H_1\in T$, we have $H_1^{\sigma n}=H_1$.
Then $\mu_1^q=\mu_1$ and hence $\mu_1^{q-1}=1$. Since $(q-1)/2$ is odd, we find that $\mu_1^{q-1}=(\mu_1^2)^{(q-1)/2}=(-1)^{(q-1)/2}=-1$; a contradiction. Thus, in this case the minimal order of lifts equal eight.

Let $q\equiv1\pmod 4$ and $\zeta$ be an element of $\overline{\F}_p$ such that $\zeta^{2(q^2+1)}=-1$. 
Consider an element $H_1=(\zeta^{q^2+1},\zeta^{q+1},\zeta,1)$. We claim that $H_1n$ is a lift of order four of $w_3w_2$.
First, we verify that $H_1\in T$. By the above equation, we see that $H_1^n=(\zeta^{q^2+1}, \zeta^{q^2-q+2}, \zeta^{q^2-q+1}, 1)$. Then $H_1^{\sigma n}=(\zeta^{q^3+q}, \zeta^{q^3-q^2+2q}, \zeta^{q^3-q^2+q}, 1)$.
Since 4 divides $q-1$, we have $\zeta^{(q^2+1)(q-1)}=1$ and hence $\zeta^{q^3+q}=\zeta^{q^2+1}$.
Therefore, $H_1^{\sigma n}=(\zeta^{q^2+1}, \zeta^{q^2-q^2+q+1}, \zeta^{q^2-q^2+1}, 1)=H_1$ and $H_1\in T$.

Since $(\zeta^{q^2+1})^2=-1$, the above equation for $(Hn)^4$ implies that $(H_1n)^4=1$, as claimed.

\textbf{Torus 11.} In this case $w=w_2w_1w_{16}$ and $w$ is conjugate to $ww_0$ in $W$.
Moreover, we see that 
$$C_W(w)=\langle w, w_0, w_{10} \rangle\simeq \mathbb{Z}_4\times \mathbb{Z}_2\times \mathbb{Z}_2.$$

Consider $n=n_2n_1n_{16}$ and $T=\overline{T}_{\sigma n}$. Suppose that there exists a complement $K$ to $T$.
Note that $[n,n_0]=1$ and hence $n_0\in\overline{N}_{\sigma n}$.
Consider elements $H_1n$ and $H_2n_0$ of $K$, where $H_1=(\alpha_1,\alpha_2,\alpha_3,\alpha_4)$ and $H_2=(\mu_1,\mu_2,\mu_3,\mu_4)$, that correspond to 
$w$ and $w_0$, respectively.

Since $n^4=h_3h_4$, Lemma~\ref{conjugation} implies that
$$H^n=(\lambda_2\lambda_4^{-2}, \lambda_1\lambda_3^2\lambda_4^{-4}, \lambda_1\lambda_3\lambda_4^{-2}, 
\lambda_1\lambda_4^{-1}),\quad
 (Hn)^4=(\lambda_3^4\lambda_4^{-4}, \lambda_3^8\lambda_4^{-8},-\lambda_3^6\lambda_4^{-6},
-\lambda_3^2\lambda_4^{-2}).$$
Since $w^4=1$, we have $-\alpha_3^2\alpha_4^{-2}=1$ and hence $\alpha_3^2=-\alpha_4^2$.
Lemma \ref{commutator} implies that $H_1^{-1}H_1^{n_0}=H_2^{-1}H_2^n$
and hence $(\alpha_1^{-2},\alpha_2^{-2},\alpha_3^{-2},\alpha_4^{-2})=(\mu^{-1}\mu_2\mu_4^{-2}, \mu_1\mu_2^{-1}\mu_3^2\mu_4^{-4}, \mu_1\mu_4^{-2}, \mu_1\mu_4^{-2})$.
Thus, $\alpha_3^{-2}=\mu_1\mu_4^{-2}=\alpha_4^{-2}$; a contradiction with $\alpha_3^2=-\alpha_4^2$.

So every supplement to $T$ has order at least $2\cdot|C_W(w)|$. Consider $M=\langle n, n_0, n_{10} \rangle$. We claim that $M$ is a supplement to $T$ such that $M\cap T$ is a group of order two.
First, we see that $[n, n_0]=[n,n_{10}]=1$ and so $n_0$ and $n_{10}$ belong to $\overline{N}_{\sigma n}$. Calculations in MAGMA show that 
$(h_3h_4)^n=(h_3h_4)^{n_{10}}=(h_3h_4)^{n_0}=h_3h_4$,
$n_0^2=[n_0,n_{10}]=1$ and $n^4=n_{10}^2=h_3h_4$.
Therefore, $L$ is a normal subgroup of $M$ and $M/L\simeq C_W(w)$. 
So $M\cap T=L$ and $|M|=2\cdot|C_W(w)|$, as claimed.

Let $q\equiv-1\negthickspace\pmod4$. In this case we prove that there are no lifts of $w$ of order four.
Suppose that $(H_1n)^4=1$, where $H_1=(\mu_1,\mu_2,\mu_3,\mu_4)$ belongs to $T$.
Using the above equation for $(Hn)^4$, we find that $\mu_3^2=-\mu_4^2$. Since $H_1\in T$,
we have $H_1^{\sigma n}=H_1$. Therefore, $\mu_1^q\mu_3^q\mu_4^{-2q}=\mu_3$ and  
$\mu_1^q\mu_4^{-q}=\mu_4$. So $\mu_4^{q+1}=\mu_1^q=\mu_3^{1-q}\mu_4^{2q}$ and hence
$\mu_4^{q-1}=\mu_3^{q-1}$. On the other hand, we know that $\mu_3^2=-\mu_4^2$ and hence $\mu_3^{q-1}=(-1)^{(q-1)/2}\mu_4^{q-1}$. Since $(q-1)/2$ is odd, we arrive at a contradiction.

Let $q\equiv1\negthickspace\pmod4$ and $\zeta\in\overline{\F}_p$ such that $\zeta^{q^3+q^2+q+1}=-1$.
Put $H_1=(\zeta^{q^3+1},-\zeta^{1-q}, \zeta^\frac{-q^3-q^2-q+1}{2},\zeta)$.
We claim that $H_1n$ is a lift of $w$ of order four. First, we verify that $H_1\in T$.

By the above equation for $H^n$, we find that 
\begin{multline*}
H_1^n=(-\zeta^{1-q}\cdot\zeta^{-2}, \zeta^{q^3+1}\cdot\zeta^{-q^3-q^2-q+1}\cdot\zeta^{-4}, 
\zeta^{q^3+1}\cdot\zeta^\frac{-q^3-q^2-q+1}{2}\cdot\zeta^{-2},\zeta^{q^3+1}\cdot\zeta^{-1})=\\=(-\zeta^{-1-q},\zeta^{-q^2-q-2},\zeta^\frac{q^3-q^2-q-1}{2},\zeta^{q^3}).
\end{multline*}
Then $H_1^{\sigma n}=(-\zeta^{-q-q^2},\zeta^{-q^3-q^2-2q},\zeta^\frac{q^4-q^3-q^2-q}{2},\zeta^{q^4})$.

Since $\zeta^{q^3+q^2+q+1}=-1$, we see that $-\zeta^{-q-q^2}=\zeta^{q^3+1}$ and $\zeta^{-q^3-q^2-2q}=-\zeta^{1-q}$.
We know that 4 divides $q-1$, therefore, $\zeta^\frac{q^4-1}{2}=\zeta^{(q^3+q^2+q+1)(q-1)/2}=1$.
Then $\zeta^\frac{q^4-q}{2}=\zeta^\frac{1-q}{2}$ and hence $\zeta^\frac{q^4-q^3-q^2-q}{2}=\zeta^\frac{-q^3-q^2-q+1}{2}$.
So $H_1^{\sigma n}=H_1$ and $H_1$ belongs to $T$.

Let $\lambda_3=\zeta^\frac{-q^3-q^2-q+1}{2}$ and $\lambda_4=\zeta$. Then $\lambda_3^2\lambda_4^{-2}=\zeta^{-q^3-q^2-q+1-2}=\zeta^{-q^3-q^2-q-1}=-1$.
Now the above equation for $(Hn)^4$ implies that $(H_1n)^4=1$, as required.

\textbf{Torus 12.} In this case $w=w_{16}w_3w_2$ and
$w$ is conjugate to $ww_0$ in $W$.
Moreover, we see that 
$$C_W(w)=\langle w,w_0,w_{24}\rangle\simeq \mathbb{Z}_4\times \mathbb{Z}_2\times \mathbb{Z}_2.$$

Consider $n=n_{16}n_3n_2$. First, we show that there are no lifts of $w$ of order four. 
Let $a$ be a preimage of $w$ in $\overline{N}_{\sigma n}$.
Then $a=H_1n$, where $H_1=(\lambda_1,\lambda_2,\lambda_3,\lambda_4)$ belongs to $\overline{T}$. 
Using MAGMA, we see that $n^4=h_3$. Then by Lemma~\ref{conjugation} we have
$a^4=(\lambda_1^4,\lambda_1^6,-\lambda_1^4,\lambda_1^2)$.
It is obvious that $a^4$ is not the identity element in $\overline{N}$. 
As a consequence, we infer that there is no complement to $\overline{T}_{\sigma n}$ in $\overline{N}_{\sigma n}$ and the minimal order of the lifts equals eight.
Now we show that there exists a supplement of order $2\cdot|C_W(w)|$. 

Denote $T=\overline{T}_{\sigma n}$.
Let $\zeta$ be an element of $\overline{\F}_p$ such that $\zeta^{\frac{q^4-1}{2}}=-1$ and denote $\eta=\zeta^{-\frac{(q^2+1)(q+1)}{2}}$. Put
$$H_0=(\zeta^{-(q^2+1)(q+1)},\zeta^{-2(q^3+q^2+1)},\zeta^{-(q^3+2q^2+1)},\zeta^{-(q^2+1)}),$$
$$H_1=(-1,\zeta^{q-1},\zeta^{-\frac{(q-1)^2}{2}},-\zeta^{\frac{(q^2+1)(q-1)}{2}}),$$
$$H_2=(-\eta^2,(-1)^{\frac{q-1}{2}}\eta^3,\eta^{\frac{q+3}{2}},\eta).$$

First, we verify that $H_0,H_1,H_2\in T$. 
By Lemma~\ref{conjugation}, 
$$H^n=(\lambda_1,\lambda_1^{2}\lambda_2^{-1}\lambda_3^{2}\lambda_4^{-2},\lambda_1^{2}\lambda_2^{-1}\lambda_3\lambda_4^{-1},\lambda_1\lambda_4^{-1}).$$

Now we apply this equation to $H_0$, $H_1$, and $H_2$.
We find that
\begin{multline*}
H_0^n=(\zeta^{-(q^2+1)(q+1)},\zeta^{-2(q^2+1)(q+1)+2(q^3+q^2+1)-2(q^3+2q^2+1)+2(q^2+1)},\\
\zeta^{-2(q^2+1)(q+1)+2(q^3+q^2+1)-(q^3+2q^2+1)+(q^2+1)},
\zeta^{-(q^2+1)(q+1)+(q^2+1)})=\\=
(\zeta^{-q^3-q^2-q-1},\zeta^{-2q^3-2q^2-2q},\zeta^{-q^3-q^2-2q},\zeta^{-q^3-q}).
\end{multline*}
Since $\zeta^{q^4}=\zeta$, we have 
\begin{multline*}
H_0^{\sigma n}=(\zeta^{-q^4-q^3-q^2-q},\zeta^{-2q^4-2q^3-2q^2},\zeta^{-q^4-q^3-2q^2},\zeta^{-q^4-q^2})=\\=(\zeta^{-q^3-q^2-q-1},\zeta^{-2q^3-2q^2-2},\zeta^{-q^3-2q^2-1},\zeta^{-q^2-1})=H_0.
\end{multline*}
Now we see that 
\begin{multline*}
H_1^{n}=(-1,\zeta^{1-q-(q-1)^2-(q^2+1)(q-1)},-\zeta^{1-q-\frac{(q-1)^2}{2}-\frac{(q^2+1)(q-1)}{2}}, \zeta^{-\frac{(q^2+1)(q-1)}{2}})=\\=(-1,\zeta^{-q^3+1},-\zeta^{\frac{-q^3-q+2}{2}}, \zeta^{-\frac{(q^2+1)(q-1)}{2}}).
\end{multline*}
Since $\zeta^{\frac{q^4-1}{2}}=-1$, we have
$(-\zeta^{\frac{-q^3-q+2}{2}})^q=-\zeta^{\frac{-q^4-q^2+2q}{2}}=-
\zeta^{\frac{-q^4+1}{2}}\zeta^{\frac{-q^2+2q-1}{2}}=\-(-1)^2\zeta^{\frac{-q^2+2q-1}{2}}=
\zeta^{-\frac{(q-1)^2}{2}}$. Moreover,
$(\zeta^{-q^3+1})^q=\zeta^{-q^4+q}=\zeta^{-1+q}$ and 
$(\zeta^{-\frac{(q^2+1)(q-1)}{2}})^q=\zeta^{-\frac{(q^4-1)}{2}+\frac{(q^2+1)(q-1)}{2}}=
-\zeta^{\frac{(q^2+1)(q-1)}{2}}.$ Thus, we have $H_1^{\sigma n}=(-1,\zeta^{-q^3+1},\zeta^{\frac{-q^3-q+2}{2}}, \zeta^{-\frac{(q^2+1)(q-1)}{2}})^q=H_1$.

Finally, we see that 
$$H_2^{n}=(-\eta^2,\eta^4(-1)^{\frac{q-1}{2}}\eta^{-3}\eta^{q+3}\eta^{-2},
\eta^4(-1)^{\frac{q-1}{2}}\eta^{-3}\eta^{\frac{q+3}{2}}\eta^{-1},-\eta)=
(-\eta^2,(-1)^{\frac{q-1}{2}}\eta^{q+2},(-1)^{\frac{q-1}{2}}\eta^{\frac{q+3}{2}},-\eta).$$
Since $\eta^{q-1}=-1$, we have 
$(\eta^{\frac{q+3}{2}})^q=\eta^{\frac{(q+3)(q-1)}{2}}\eta^{\frac{q+3}{2}}=(-1)^{\frac{q+3}{2}}\eta^{\frac{q+3}{2}}$ and
$$H_2^{\sigma n}=(-\eta^2,(-1)^{\frac{q-1}{2}}\eta^{q+2},(-1)^{\frac{q-1}{2}}\eta^{\frac{q+3}{2}},-\eta)^q=(-\eta^2,(-1)^{\frac{q-1}{2}}\eta^{3},(-1)^{\frac{q-1}{2}}(-1)^{\frac{q+3}{2}}\eta^{\frac{q+3}{2}},\eta)=H_2.$$
Therefore, we infer that $H_0,H_1,H_2\in T$.
Using MAGMA, we see that $[n, n_0]=[n, n_{24}]=1$
and hence $a=H_1n_{16}n_3n_2$, $b=H_2n_{24}$, and $c=H_0n_0$ belong to $\overline{N}_{\sigma n}$. 
We claim that $M=\langle a,b,c\rangle$ is a supplement of order $2\cdot|C_W(w)|$ to $T$ such that $M\cap T=\langle h_3 \rangle$.

Since $(Hn)^4=(\lambda_1^4,\lambda_1^6,-\lambda_1^4,\lambda_1^2)$, we have 
$a^4=h_3$. By Lemma~\ref{conjugation},
$$H^{n_{24}}=(\lambda_1^{-1},\lambda_1^{-3}\lambda_2,\lambda_1^{-2}\lambda_3,\lambda_1^{-1}\lambda_4), \quad
(Hn_{24})^2=(1,-\lambda_1^{-3}\lambda_2^{2},\lambda_1^{-2}\lambda_3^{2},-\lambda_1^{-1}\lambda_4^2).$$ 
Hence, we get $b^2=(1,1,\eta^{q-1},1)=h_3$. 

Now we verify that $[b,c]=1$.
By Lemma~\ref{commutator}, it is equivalent to $H_1^{-2}=H_0^{-1}H_0^n$.
Applying the equation for $H^n$, we find that
\begin{multline*}H_0^{-1}H_0^{n}=(\zeta^{(q^2+1)(q+1)},\zeta^{2(q^3+q^2+1)},\zeta^{q^3+2q^2+1},\zeta^{q^2+1})(\zeta^{-q^3-q^2-q-1},\zeta^{-2q^3-2q^2-2q},\zeta^{-q^3-q^2-2q},\zeta^{-q^3-q})=\\(1,\zeta^{2-2q},\zeta^{q^2-2q+1},\zeta^{-q^3+q^2-q+1}),
\end{multline*}
and 
$H_1^{-2}=(1,\zeta^{2-2q},\zeta^{(q-1)^2},\zeta^{-(q^2+1)(q-1)})$. So $H_1^{-2}=H_0^{-1}H_0^n$ and hence
$[a,c]=1$.

Now we verify that $[a,b]\in\langle h_3\rangle$,
Since $\eta^{q-1}=-1$, we have
\begin{multline*}H_2^{-1}H_2^{n}=(-\eta^{-2},(-1)^{\frac{q-1}{2}}\eta^{-3},\eta^{-\frac{q+3}{2}},\eta^{-1})(-\eta^2,(-1)^{\frac{q-1}{2}}\eta^{q+2},(-1)^{\frac{q-1}{2}}\eta^{\frac{q+3}{2}},-\eta)=(1,-1,(-1)^{\frac{q-1}{2}},-1),
\end{multline*}
and 
\begin{multline*}H_1^{-1}H_1^{n_{24}}=(-1,\zeta^{1-q},\zeta^{\frac{(q-1)^2}{2}},-\zeta^{-\frac{(q^2+1)(q-1)}{2}})(-1,-\zeta^{q-1},\zeta^{-\frac{(q-1)^2}{2}},\zeta^{\frac{(q^2+1)(q-1)}{2}})=(1,-1,1,-1).
\end{multline*}
Thus, by Lemma~\ref{conjugation}, we get that $[a,b]$ is equal to either $1$ or $h_3$. Therefore, we have $[a,b]\in\langle h_3 \rangle$. 

It remains to verify that $[c,b]\in\langle h_3 \rangle$.
Using the equations for $H^{n_0}$ and $H^{n_{24}}$, we obtain
\begin{multline*}H_0^{n_{24}}=(\zeta^{(q^2+1)(q+1)},\zeta^{3(q^2+1)(q+1)}\zeta^{-2(q^3+q^2+1)},\zeta^{2(q^2+1)(q+1)}\zeta^{-(q^3+2q^2+1)},\zeta^{(q^2+1)(q+1)}\zeta^{-(q^2+1)})=\\
(\zeta^{(q^2+1)(q+1)},\zeta^{q^3+q^2+3q+1},\zeta^{q^3+2q+1},\zeta^{q^3+q}),
\end{multline*}
and hence
\begin{multline*}H_0^{-1}H_0^{n_{24}}=(\zeta^{(q^2+1)(q+1)},\zeta^{2(q^3+q^2+1)},\zeta^{q^3+2q^2+1},\zeta^{q^2+1})(\zeta^{(q^2+1)(q+1)},\zeta^{q^3+q^2+3q+1},\zeta^{q^3+2q+1},\zeta^{q^3+q})=\\(\zeta^{2(q^2+1)(q+1)},\zeta^{3q^3+3q^2+3q+3},\zeta^{2q^3+2q^2+2q+2},\zeta^{q^3+q^2+q+1}).
\end{multline*}
On the other hand, we have
$$H_2^{-1}H_2^{n_0}=H_2^{-2}=(\eta^{-4},\eta^{-6},\eta^{-(q+3)},\eta^{-2})=
(\eta^{-4},\eta^{-6},-\eta^{-4},\eta^{-2}).$$
Since $\eta=\zeta^{-\frac{(q^2+1)(q+1)}{2}}$, we
infer that $cb=bc$.

Using MAGMA, we see that the group $\langle h_3\rangle$
is a normal subgroup of $M$. So $M/\langle h_3 \rangle$
is a homomorphic image of $C_W(w)$ and hence $|M|\leq 2\cdot|C_W(w)|$. Thus, $|M|=2\cdot|C_W(w)|$, as claimed.

\textbf{Tori 13 and 15.} In this case $w=w_3w_2w_7$ or $w=w_3w_2w_7w_0$, respectively. Calculations in GAP show that 
$$C_W(w)\simeq\langle w_3w_2w_7\rangle\times\langle w_0\rangle\simeq\mathbb{Z}_6\times \mathbb{Z}_2.$$
Consider $n=h_1n_3n_2n_7$. Using MAGMA, we see that $n^{6}=n_0^2=1$ and $[n,n_0]=1$. Therefore, $K=\langle n,n_0\rangle$ is a complement
to $\overline{T}_{\sigma n}$ in $\overline{N}_{\sigma n}$ as well as to 
$\overline{T}_{\sigma n_0n}$ in $\overline{N}_{\sigma n_0n}$.

\textbf{Tori 14 and 16.} In this case $w=w_3w_2w_1$ or $w=w_3w_2w_1w_0$, respectively. Calculations in GAP show that 
$$C_W(w)=\langle w_3w_2w_1\rangle\times\langle w_0 \rangle\simeq\mathbb{Z}_6\times \mathbb{Z}_2.$$
Consider $n=n_3n_2n_1$. Using MAGMA, we see that $n^6=n_0^2=1$ and $[n,n_0]=1$. Therefore, $K=\langle n, n_0\rangle$ is a complement to $\overline{T}_{\sigma n}$ in $\overline{N}_{\sigma n}$ as well as to 
$\overline{T}_{\sigma n_0n}$ in $\overline{N}_{\sigma n_0n}$.

\textbf{Torus 22.} In this case $w=w_8w_{16}w_3w_2$ and $w$ is conjugate to $ww_0$ in $W$.
Calculations in GAP show that $C_W(w)$ is isomorphic to $\SL_2(3):4$ and has the following presentation
$$C_W(w)\simeq \langle a,b,c~|~a^4, b^4, c^3, aba^{-1}b^{-1}, a^3b^2cb^{-1}c^{-2}\rangle. $$ 
Moreover, elements $w_{16}w_8$, $w_2w_3$, and $w_1w_3w_{16}w_{19}$ satisfy these relations and generate $C_W(w)$. 

Let $n=h_2n_8n_{16}n_3n_2$. Denote by $M$ a supplement to $T=\overline{T}_{\sigma n}$ of minimal order.
Calculations in GAP show that $(w_{21}w_8w_3w_2)^{w_{16}}$ belongs to $C_W(w)$. Now Lemma~\ref{nolift} implies that
$h_3=h_3^{n_{16}}$ belongs to $L=T\cap M$. Since $[n, n_1n_3n_{16}n_{19}]=1$, we infer that 
$n_1n_3n_{16}n_{19}\in\overline{N}_{\sigma n}$. Now $L$ is a normal subgroup in $\overline{N}_{\sigma n}$
and hence $h_3h_4=h_3^{n_1n_3n_{16}n_{19}}\in L$. Therefore, we have $|L|\geq4$. 

Now we claim that $a=n_{16}n_8$, $b=h_2n_2n_3$, $c=n_1n_3n_{16}n_{19}$ belong to $\overline{N}_{\sigma n}$ and generate a supplement to $T$ of order $4\cdot|C_W(w)|$.
Using MAGMA, we see that $[n,a]=[n,b]=[n,c]=1$ and hence $a,b,c\in \overline{N}_{\sigma n}$. 
We find that $a^4=h_3$, $b^4=h_3$, $c^3=1$, $[a,b]=h_3$, $a^3b^2cb^{-1}c^{-2}=h_3$.
$h_3^{a}=h_3^{b}=h_3$, $h_3^{c}=h_3h_4$, $h_4^{a}=h_3h_4$, $h_4^{b}=h_3h_4$, $h_4^{c}=h_3$.
Therefore, $\langle h_3, h_4\rangle$ is normal subgroup  of $M=\langle a,b,c\rangle$ and $M/\langle h_3,h_4\rangle$ is a homomorphic image of $C_W(w)$. Thus, $|M|\leq 4\cdot|C_W(w)|$.
On the other hand, we know that $|M|\geq 4\cdot|C_W(w)|$ and hence $M$ is a supplement to $T$ of order $4\cdot|C_W(w)|$.

Finally, we see that $(n_8n_{16}n_3n_2)^4=1$ and hence $n_8n_{16}n_3n_2$ is a required lift of order four.

\textbf{Torus 23.} In this case $w=w_3w_2w_1w_{16}$ and $C_W(w)\simeq\langle w \rangle\simeq \mathbb{Z}_8$.
Moreover, elements $w$ and $ww_0$ are conjugate in $W$.

Consider $n=n_3n_2n_1n_{16}$. Using MAGMA, we see that $n^8=1$. This implies that $K=\langle n\rangle$ is a complement to $\overline{T}_{\sigma n}$ in $\overline{N}_{\sigma n}$.

\textbf{Torus 24.} In this case $w=w_8w_1w_2w_4$ and $C_W(w)\simeq\langle w \rangle\simeq \mathbb{Z}_{12}$.
Moreover, elements $w$ and $ww_0$ are conjugate in $W$.

Consider $n=n_8n_1n_2n_4$. Using MAGMA, we see that $n^{12}=1$ and
hence $K=\langle n\rangle$ is a complement to $\overline{T}_{\sigma n}$ in $\overline{N}_{\sigma n}$.

\textbf{Torus 25 and 18.}
In this case $w=w_6w_1w_9w_4$ or $w=w_0w_6w_1w_9w_4$, respectively.
Furthermore, we have $C_W(w)\simeq\Z_3\times\SL_2(3)$.

Calculations in GAP show that $C_W(w)$ has the following presentation
$$C_W(w)\simeq\langle a,b,c~|~a^3, b^4, c^3, [a,b], [a,c], bcb^{-1}cbc, (c^{-1}b)^3\rangle,$$ 
moreover, elements
$w_1w_4w_2w_{19}$, $w_1w_3w_6w_{20}$, $w_1w_2$ generate $C_W(w)$
and satisfy this set of relations.
Consider $n=n_6n_1n_9n_4$ and denote $a=h_1h_2h_4n_1n_4n_2n_{19}$, $b=h_1h_3h_4n_1n_3n_6n_{20}$, and $c=h_1h_2n_1n_2$.
Using MAGMA, we see that $[n,a]=[n,b]=[n,c]=1$ and hence
$a,b,c\in\overline{N}_{\sigma n}$. Moreover, these elements satisfy
the relations for $C_W(w)$. So $K=\langle a,b,c\rangle$
is a homomorphic image of $C_W(w)$ and hence $K\simeq C_W(w)$.
Thus, $K$ is a complement to $\overline{T}_{\sigma n}$ in $\overline{N}_{\sigma n}$
as well as to $\overline{T}_{\sigma n_0n}$ in $\overline{N}_{\sigma n_0n}$

\section{Tables}

In this section we illustrate results of both theorems in Tables~\ref{t:main} and \ref{lifts_F4}. Let $G=F_4(q)$ and $q$ be odd. 

The first table is devoted to maximal tori of $G$ and Theorem~\ref{th:F4}. There is a bijection between conjugacy classes of maximal tori of $G$ and conjugacy classes of $W$. We choose representatives of the classes of $W$ according to \cite{Law}.
If $T$ is a maximal torus corresponding to a representative $w$ then we write $w$ into the second column of Table~\ref{t:main}. 
The first column contains the number of $T$. If elements $w$ and $ww_0$ are not conjugate then we also indicate in parentheses the number of a maximal tori corresponding to the class $(ww_0)^W$. The third column contains the order of $w$ and the fourth a structure description of $C_W(w)$. The fifth column contains a cyclic structure of $T$ which can be found in a standard way by diagonalizing the matrix  $q\cdot{w}-I$, where $I$ is the identity matrix. Here by $n^k$ we denote the elementary abelian group $\mathbb{Z}_n^k$. Note that the cyclic structure in this column is true for even $q$ as well. Finally, the sixth column contains the minimal possible order of intersections $M\cap T$, where $M$ is a supplement to $T$. Here the entry 4/8 for the second (ninth) torus means that if $q\equiv 1\negthickspace\pmod 4$ ($q\equiv -1\negthickspace\pmod 4$) then the minimal order of the intersection $M\cap T$ is equal to 4, otherwise it is equal to 8.

In the second table we give examples of lifts of $w\in W$ to $N(G,T)$ of order $|w|$ if they exist. The representatives $w$ and their orders are the same as in Table~\ref{t:main}. The third column contains examples of lifts. In the fourth column we write conditions that are necessary and sufficient for the existence of the lifts.
Recall that if there are no lifts of $w$ of order $|w|$ then the minimal order of the lifts equals $2|w|$.

\begin{table}[H]
\centering
\caption{Minimal supplements to maximal tori of $F_4(q)$\label{t:main}}
\begin{tabular}{|l|l|c|l|l|c|}
 \hline
 \No  & $w$ & $|w|$ & Structure of $C_W(w)$ & Cyclic structure of $T$ & Suppl. \\ \hline
  1 (17) & $1$ & 1 & $W(F_4)\simeq GO_4^+$ & $(q-1)^4$ & 4 \\ [2pt]\hline
  2 (9) & $w_2$ & 2 & $\mathbb{Z}_2\times \mathbb{Z}_2\times S_4$ & $(q-1)^2\times(q^2-1)$ &  4/8 \\ [2pt]\hline
  3 (10) & $w_3$ & 2 & $\mathbb{Z}_2\times \mathbb{Z}_2\times S_4$ & $(q-1)^2\times(q^2-1)$ & 2  \\[2pt]\hline
  4  & $w_6w_3$ & 2 & $D_8\times D_8$ & $(q-1)\times(q+1)\times(q^2-1)$ & 4 \\[2pt]\hline
  5  & $w_{16}w_3$ & 2 & $\mathbb{Z}_2\times \mathbb{Z}_2\times \mathbb{Z}_2\times \mathbb{Z}_2$ & $(q^2-1)^2$ & 2 \\[2pt]\hline
  6 (21) & $w_2w_1$ & 3 & $\mathbb{Z}_6\times S_3$ & $(q-1)\times(q^3-1)$ & 1 \\[2pt]\hline
  7 (20) & $w_3w_4$ & 3 & $\mathbb{Z}_6\times S_3$ & $(q-1)\times(q^3-1)$ & 1 \\[2pt]\hline
  8 (19) & $w_3w_2$ & 4 & $\mathbb{Z}_4\times D_8$  & $(q-1)\times(q^2+1)(q-1)$ & 2  \\[2pt]\hline
  9 (2) & $w_{21}w_8w_2$ & 2 & $\mathbb{Z}_2\times \mathbb{Z}_2\times S_4$ & $(q^2-1)\times(q+1)^2$ &  4/8 \\[2pt]\hline
  10 (3) & $w_8w_6w_3$ & 2 & $\mathbb{Z}_2\times \mathbb{Z}_2\times S_4$ & $(q^2-1)\times(q+1)^2$ & 2 \\[2pt]\hline
  11  & $w_2w_1w_{16}$ & 4 &  $\mathbb{Z}_4\times \mathbb{Z}_2\times \mathbb{Z}_2$ & $(q-1,2)\times\cfrac{(q^4-1)}{(q-1,2)}$ &  2  \\[5pt]\hline
  12 & $w_{16}w_3w_2$ & 4 & $\mathbb{Z}_4\times \mathbb{Z}_2\times \mathbb{Z}_2$ & $(q-1,2)\times\cfrac{(q^4-1)}{(q-1,2)}$  & 2 \\[5pt]\hline
  13 (15) & $w_3w_2w_7$ & 6 & $ \mathbb{Z}_6\times \mathbb{Z}_2$ & $(q-1)(q^3+1)$ & 1 \\[2pt]\hline
  14 (16) & $w_3w_2w_1$ & 6 & $ \mathbb{Z}_6\times \mathbb{Z}_2$ & $(q-1)(q^3+1)$ & 1 \\[2pt]\hline
  15 (13) & $w_{16}w_3w_{12}$ & 6 &  $ \mathbb{Z}_6\times \mathbb{Z}_2$ & $(q+1)(q^3-1)$ & 1 \\[2pt]\hline
  16 (14) & $w_{21}w_2w_1$ & 6 &  $ \mathbb{Z}_6\times \mathbb{Z}_2$ & $(q+1)(q^3-1)$ & 1 \\[2pt]\hline
  17 (1) & $w_{21}w_8w_6w_3$ &  2 & $W(F_4)\simeq GO_4^+$ & $(q+1)^4$ & 4 \\[2pt]\hline
  18 (25) & $w_{21}w_2w_1w_{19}$ & 3  & $ \mathbb{Z}_3\times\SL_2(3)$ & $(q^2+q+1)^2$ & 1 \\[2pt]\hline
  19 (8) & $w_{21}w_8w_3w_2$ & 4 & $\mathbb{Z}_4\times D_8$ & $(q^2+1)(q+1)\times(q+1)$ & 2  \\[2pt]\hline
  20 (7) & $w_{21}w_8w_3w_{10}$ & 6 & $\mathbb{Z}_6\times S_3$ & $(q^3+1)\times(q+1)$ & 1 \\[2pt]\hline
  21 (6) & $w_6w_1w_{16}w_3$ & 6 & $\mathbb{Z}_6\times S_3$ & $(q^3+1)\times(q+1)$ & 1 \\[2pt]\hline
  22 & $w_8w_{16}w_3w_2$ & 4 & $\SL_2(3): \mathbb{Z}_4$ & $(q^2+1)^2$ &  4 \\[2pt]\hline
  23 & $w_3w_2w_1w_{16}$ & 8 &  $ \mathbb{Z}_8$ & $(q^4+1)$ & 1 \\[2pt]\hline
  24 & $w_8w_1w_2w_4$ & 12 & $ \mathbb{Z}_{12}$ & $(q^4-q^2+1)$ & 1 \\[2pt]\hline
  25 (18) & $w_6w_1w_9w_4$ & 6 & $ \mathbb{Z}_3\times\SL_2(3)$ & $(q^2-q+1)^2$ & 1 \\
  \hline
\end{tabular}
\end{table}

\begin{table}[H]
\caption{Lifts of elements of $W(F_4)$}\label{lifts_F4}
\centering
\begin{tabular}{|l|c|l|l|}
 \hline
  $w$ & $|w|$ & lift of order $|w|$ & Condition \\ \hline
  $1$ & 1 & 1 &  \\ [1pt]\hline
  $w_2$ & 2 & $h_1n_2$ &  \\ [1pt]\hline 
  $w_3$ & 2 & $h_2n_3$ & \\ [1pt]\hline 
  $w_6w_3$ & 2 & $h_1n_6n_3$ &  \\ [1pt]\hline 
  $w_{16}w_3$ & 2 & $h_1h_2n_{16}n_3$ &  \\ [1pt]\hline 
  $w_2w_1$  & 3  & $n_2n_1$  &  \\ [1pt]\hline
  $w_3w_4$  & 3  & $n_3n_4$  &  \\ [1pt]\hline
  $w_{3}w_2$ & 4 & $(\zeta^{q^2+1},\zeta^{q+1},\zeta,1)n_3n_2$, $\zeta^{2(q^2+1)}=-1$ & $q\equiv1\negthickspace\pmod4$ \\ [1pt] \hline 
  $w_{21}w_8w_2$ & 2 & $h_1n_2n_0$ & \\ [1pt]\hline 
  $w_8w_6w_3$ & 2 & $h_2n_3n_0$ & \\  [1pt]\hline 
  $w_2w_1w_{16}$ & 4 & 
  $(\zeta^{q^3+1},-\zeta^{1-q},\zeta^\frac{-q^3-q^2-q+1}{2},\zeta)n_2n_1n_{16}$,  & $q\equiv1\negthickspace\pmod4$ \\  [1pt] 
     & & $\zeta^{q^3+q^2+q+1}=-1$ & \\ [1pt]\hline
  $w_{16}w_3w_2$ & 4 & -- & \\  [1pt]\hline 
  $w_3w_2w_7$  &  6 & $h_1n_3n_2n_7$  &  \\ [1pt]\hline
  $w_3w_2w_1$  &  6 &  $n_3n_2n_1$ &  \\ [1pt]\hline
  $w_{16}w_3w_{12}$  & 6  & $h_1n_3n_2n_7n_0$  &  \\ [1pt]\hline
  $w_{21}w_2w_1$  & 6  &  $n_3n_2n_1n_0$ &  \\ [1pt]\hline
  $w_{21}w_8w_6w_3$ & 2 & $n_0$ & \\ [1pt]\hline
  $w_{21}w_2w_1w_{19}$  &  3 &  $n_{21}n_2n_1n_{19}$ &  \\ [1pt]\hline
  $w_{21}w_8w_3w_2$ & 4 & -- & \\ [1pt]\hline
  $w_{21}w_8w_3w_{10}$  & 6  & $n_3n_4n_0$  &  \\ [1pt]\hline
  $w_6w_1w_{16}w_3$  & 6  & $n_2n_1n_0$  &  \\ [1pt]\hline
  $w_8w_{16}w_3w_2$ & 4 &  $n_8n_{16}n_3n_2$ & \\[1pt]\hline 
  $w_3w_2w_1w_{16}$ & 8  &  $n_3n_2n_1n_{16}$ &  \\ [1pt]\hline
  $w_8w_1w_2w_4$ &  12 &  $n_8n_1n_2n_4$ &  \\ [1pt]\hline
  $w_6w_1w_9w_4$ &  6 & $n_6n_1n_9n_4$  &  \\ [1pt]\hline
\end{tabular}
\end{table}

%//\vspace{2em}
%$w_0=w_{21}w_8w_6w_3=w_1w_3w_{14}w_{21}$
%$n_0=h_4n_1n_3n_{14}n_{21}$

%%\section{Conjugation}
%%$$H^{n_1}=(\lambda_1^{-1}\lambda_2,\lambda_2,\lambda_3,\lambda_4),$$
%%$$H^{n_2}=(\lambda_1,\lambda_1\lambda_2^{-1}\lambda_3^2,\lambda_3,\lambda_4),$$
%%$$H^{n_3}=(\lambda_1,\lambda_2,\lambda_2\lambda_3^{-1}\lambda_4,\lambda_4),$$
%%$$H^{n_4}=(\lambda_1,\lambda_2,\lambda_3,\lambda_3\lambda_4^{-1}),$$
%%$$H^{n_6}=(\lambda_1,\lambda_1^2\lambda_2^{-1}\lambda_4^2,\lambda_1\lambda_2^{-1}\lambda_3\lambda_4,\lambda_4),$$
%%$$H^{n_8}=(\lambda_1^{-1}\lambda_4^2,\lambda_1^{-2}\lambda_2\lambda_4^2,\lambda_1^{-1}\lambda_3\lambda_4,\lambda_4),$$
%%$$H^{n_{13}}=(\lambda_1,\lambda_1^2\lambda_2\lambda_3^{-2},\lambda_1^2\lambda_3^{-1},\lambda_1\lambda_3^{-1}\lambda_4),$$
%%$$H^{n_{14}}=(\lambda_1\lambda_2^{-1}\lambda_4^2,\lambda_2^{-1}\lambda_4^4,\lambda_2^{-1}\lambda_3\lambda_4^2,
%%\lambda_4),$$
%%$$H^{n_{16}}=(\lambda_1,\lambda_1\lambda_2\lambda_4^{-2},\lambda_1\lambda_3\lambda_4^{-2},\lambda_1\lambda_4^{-1}),$$
%%$$H^{n_{21}}=(\lambda_1\lambda_4^{-2},\lambda_2\lambda_4^{-4},\lambda_3\lambda_4^{-3},\lambda_4^{-1}).$$
%%$$H^{n_{24}}=(\lambda_1^{-1},\lambda_1^{-3}\lambda_2,\lambda_1^{-2}\lambda_3,\lambda_1^{-1}\lambda_4).$$

\Addresses
\end{document}